\title{Strongly maximal matchings in infinite weighted graphs }
\author{Ron Aharoni}
\address{Department of Mathematics\\Technion, Haifa\\ Israel 32000}
\email[Ron Aharoni]{ra@tx.technion.ac.il}
\author{Eli Berger}
\address{Department of Mathematics\\Princeton University and \\Department of Mathematics
\\Technion, Haifa\\ Israel 32000} \email[Eli Berger]{berger@cri.haifa.ac.il}
\author{Agelos Georgakopoulos}
\address{Universit\"at Hamburg}
\email[Agelos Georgakopoulos]{http://www.math.uni-hamburg.de/home/georgakopoulos/}
\author{Philipp Spr\"ussel}
\address{Universit\"at Hamburg}
\email[Philipp Spr\"ussel]{spruessel@math.uni-hamburg}
\begin{document}
\thanks{\noindent The research of the first author was
supported by grant no. 780-04 of the Israel Science Foundation, by
the Technion's research promotion fund, and by the Discont Bank
chair.}

\maketitle

\newcommand{\pf}{{\it Proof.}~}
\newcommand{\hetz}{{^\curvearrowright}}
\newcommand{\quo}{/}
\newcommand{\extends}{{\succcurlyeq}}
\newcommand{\fextends}{{\vec{\extends}}}
\newcommand{\extended}{{\preccurlyeq}}
\newcommand{\fextended}{{\vec{\extended}}}

\newtheorem{claim}{Claim}

\newcommand{\mymargin}[1]{
  \marginpar{%
    \begin{minipage}{\marginparwidth}\small%
      \begin{flushleft}%
        #1%
      \end{flushleft}%
    \end{minipage}%
  }%
}%

\newcommand{\cig}{{\mathcal I}(G)}

\newcommand{\ca}{\mathcal A}
\newcommand{\cb}{\mathcal B}
\newcommand{\cc}{\mathcal C}
\newcommand{\cd}{\mathcal D}
\newcommand{\ce}{\mathcal E}
\newcommand{\cf}{\mathcal F}
\newcommand{\cg}{\mathcal G}
\newcommand{\ci}{\mathcal I}
\newcommand{\cj}{\mathcal J}
\newcommand{\ck}{\mathcal K}
\newcommand{\cl}{\mathcal L}
\newcommand{\cm}{\mathcal M}
\newcommand{\cn}{\mathcal N}
\newcommand{\co}{\mathcal O}
\newcommand{\cp}{\mathcal P}
\newcommand{\cq}{\mathcal Q}
\newcommand{\cs}{\mathcal S}
\newcommand{\cgr}{\mathcal R}
\newcommand{\ct}{\mathcal T}
\newcommand{\cu}{\mathcal U}
\newcommand{\cv}{\mathcal V}
\newcommand{\cx}{\mathcal X}
\newcommand{\cy}{\mathcal Y}
\newcommand{\cz}{\mathcal Z}
\newcommand{\cw}{\mathcal W}
\newcommand{\tlp}{T_{\lambda^+}}
\newcommand{\cyta}{\mathcal Y \langle T_\alpha \rangle}

\newcommand{\uuu}{\bigsqcup}
\newcommand{\giv}[1]{\ensuremath{H_i(#1)}}

\newcommand{\givp}[1]{\ensuremath{H_{i+1}(#1)}}

\newcommand{\enp}{\hfill \Box}
\newcommand{\tn}{\tilde{N}}
\newcommand{\ch}{\ensuremath{\mathcal H}}

\newcommand{\etabar}{\overline{\eta}}
\newcommand{\heta}{\hat{\eta}}
\newcommand{\tg}{\tilde{\gamma}}
\newcommand{\ttau}{\tilde{\tau}}
\newcommand{\rn}{\mathbb{R}^n}

\theoremstyle{plain}
\newtheorem{theorem}{Theorem}[section]
\newtheorem{lemma}[theorem]{Lemma}
\newtheorem{fact}[theorem]{Fact}
\newtheorem{conjecture}[theorem]{Conjecture}
\newtheorem{corollary}[theorem]{Corollary}
\newtheorem{assertion}[theorem]{Assertion}
\newtheorem{proposition}[theorem]{Proposition}
\newtheorem{observation}[theorem]{Observation}
\newtheorem{problem}[theorem]{Problem}

\theoremstyle{remark}
\newtheorem{notation}[theorem]{Notation}
\newtheorem{remark}[theorem]{Remark}
\newtheorem{definition}[theorem]{Definition}
\newtheorem{convention}[theorem]{Convention}
\newtheorem{assumption}[theorem]{Assumption}

\theoremstyle{remark}
\newtheorem{example}[theorem]{Example}

\def\sm{\setminus}

\newcommand{\sss}{****} \newcommand{\fig}[1]{Figure~\ref{#1}}
\newcommand{\Fig}{Figure}

\newenvironment{txteq}{\begin{equation}\begin{minipage}[c]{0.8\textwidth}\em}
{\end{minipage}\ignorespacesafterend\end{equation}\ignorespacesafterend}

\newcommand{\comment}[1]{}

\newcommand{\showFig}[2]{ \begin{figure}[htbp] \centering \noindent
\epsfbox{#1.eps} \caption{\small #2} \label{#1} \end{figure} }

\newcommand{\N}{\mathbb N} \newcommand{\R}{\mathbb R}

\newcommand{\Tr}[1]{Theorem~\ref{#1}}
\newcommand{\ProR}[1]{Proposition~\ref{#1}}
\newcommand{\Sr}[1]{Section~\ref{#1}} \newcommand{\g}{\ensuremath{G\ }}
\newcommand{\Lr}[1]{Lemma~\ref{#1}} \newcommand{\defi}[1]{\emph {#1}}

\newcommand{\oo}{\ensuremath{\omega}}
\newcommand{\OO}{\ensuremath{\Omega}}
\newcommand{\G}{\ensuremath{G}}
\newcommand{\RRR}{\ensuremath{\mathcal R}}
\newcommand{\TTT}{\ensuremath{\mathcal T}}

\providecommand{\MAX}{\text{MAX}}

\renewcommand{\theenumi}{(\roman{enumi})}
\renewcommand{\labelenumi}{\theenumi}

\begin{abstract}
Given an assignment of weights $w$ to the edges of a graph $G$, a matching $M$ in $G$ is called {\em strongly $w$-maximal} if for any matching $N$ there holds
$\sum\{w(e) \mid e \in N \setminus M\} \le \sum\{w(e) \mid e \in
M \setminus N\}$. We prove that if $w$ assumes only finitely many values all of which are rational then $G$ has a strongly $w$-maximal matching.
\end{abstract}

\section{introduction}
Infinite min-max theorems are rather weak when stated in terms of
cardinalities. Cardinalities are too crude a measure to
capture the duality relationship. To exemplify this point,
consider Menger's theorem, the first combinatorial theorem that
was cast in the form of a min-max equality. Formulated in terms of
cardinalities, it states that given two sets, $A$ and $B$ in an
infinite graph, the maximal cardinality $\kappa$ of a family of
disjoint $A$--$B$ paths is equal to the minimal cardinality of a
vertex-set separating $A$ from $B$. This is easy to prove: if $\kappa$ is finite then it follows from the finite version of the theorem, and if it is infinite then we can take a maximal set $\mathcal P$ of disjoint $A$--$B$ paths, and choose the set of vertices appearing in $\mathcal P$ as our separating set. A more succinct formulation, capturing the duality in its
full strength is the following, which is known as the Erd\H{o}s-Menger Conjecture:

\begin{theorem}[\cite{infinitemenger}]\label{infmenger}
Given two vertex-sets, $A$ and $B$ in an infinite graph, there exists a
set $F$ of disjoint $A$--$B$ paths and an $A$--$B$ separating set
$S$ such that $S$ consists of a choice of precisely one vertex
from every path in $F$.
\end{theorem}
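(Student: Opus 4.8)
The plan is to prove this statement — the Erd\H{o}s--Menger theorem — by transfinite induction on $|V(G)|$, organised around the notions of \emph{wave} and \emph{hindrance} introduced in Aharoni's earlier work on the bipartite and countable cases.

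\textbf{Normalisation.} First I would put the instance in a convenient normal form: delete every vertex and edge not lying on some finite $A$--$B$ path; split each internal vertex $v\notin A\cup B$ into an ``entrance'' and an ``exit'' joined by an edge, so that internal-vertex-disjointness of paths becomes the more flexible edge/path-disjointness of a subdivided graph; and arrange that $A$ and $B$ are independent with no $A$--$B$ edge. None of this changes whether the desired pair $(F,S)$ exists.

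\textbf{Waves.} Call a (possibly trivial, one-vertex) path an \emph{$A$-path} if it starts in $A$, a set of pairwise disjoint $A$-paths a \emph{warp}, and write $\mathrm{out}(\cw)$ for the set of terminal vertices of the paths of a warp $\cw$. A warp $\cw$ is a \emph{wave} towards $B$ if $\mathrm{out}(\cw)$ separates $A$ from $B$ --- informally, $\cw$ is partial progress from $A$ towards $B$ that carries its own frontier cut; the warp $\{(a):a\in A\}$ of trivial paths is the trivial wave. On waves there is a natural partial order (extending paths and pushing the frontier forward), and a wave is a \emph{hindrance} if it exhibits genuine slack, i.e.\ it separates some vertex of $A$ from $B$ ``for free''; $G$ is \emph{hindered} if it admits one. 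Two structural lemmas must be established: (i) the union of a chain of waves is again a wave, so by Zorn's Lemma there is a maximal wave $\cw^*$, with frontier $X:=\mathrm{out}(\cw^*)$; (ii) if $G$ is unhindered then the residual graph lying ``beyond'' $X$ towards $B$ is again unhindered, and in it the whole of $X$ can be linked into $B$ by disjoint paths.

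\textbf{The dichotomy.} If $G$ is hindered, take a hindrance, use the slack it exposes to pass to a graph $G'$ of smaller order (deleting or contracting the ``used-up'' part), apply the induction hypothesis there to get a Menger pair $(F',S')$, and splice it with the hindrance to assemble $(F,S)$ for $G$. If $G$ is unhindered, take the maximal wave $\cw^*$; by lemma (ii) there is a warp $\cl$ linking $X$ entirely into $B$, and concatenating the paths of $\cl$ onto the paths of $\cw^*$ that end in $X$ produces a set $F$ of disjoint $A$--$B$ paths; then $X$, which by the definition of a wave separates $A$ from $B$ and meets each of these paths in exactly one vertex, is the required separator $S$. The two cases together close the induction.

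\textbf{Main obstacle.} I expect essentially all the difficulty to lie in the wave calculus. Lemma (i) is already delicate: along a chain of waves, paths can lengthen without bound, so one must argue that a legitimate separating frontier survives in the limit. Harder still is lemma (ii), the unhindered case, where the hypothesis ``no hindrance'' has to be converted into an \emph{actual} disjoint linkage of the entire frontier $X$ into $B$; this is where the theorem carries its real content, and where a naive induction stalls because the residual graph need not be smaller than $G$. Choosing the induction parameter, performing the splicing in the hindered case, and making it all mesh with lemmas (i) and (ii) is the long and technical heart of the argument.
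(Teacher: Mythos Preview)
The paper does not prove Theorem~\ref{infmenger}; it is quoted from \cite{infinitemenger} as background and motivation, so there is no ``paper's own proof'' to compare against. Your outline is a faithful high-level sketch of the actual Aharoni--Berger strategy (waves, hindrances, Zorn on chains of waves, the hindered/unhindered dichotomy), and you correctly identify that the real work lies in the unhindered case, where ``no hindrance'' must be upgraded to an honest linkage of the frontier into $B$; but be aware that what you call ``lemma (ii)'' is not a lemma one proves en route---it is essentially the whole theorem, and the published argument handling it runs to dozens of pages of auxiliary machinery (safe/dangerous vertices, ladders, $\kappa$-ladders, reductions at singular cardinals) that your sketch does not yet hint at.
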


This formulation is tantamount to requiring the complementary
slackness conditions to hold between the two dual objects.

A similar situation occurs when studying matchings in infinite graphs. It is easy to prove the existence of a maximal matching with respect to cardinality, however, it is possible to find matchings that are maximal in a stronger sense:

\begin{definition}
A matching $M$ in a hypergraph $H$ is said to be {\em strongly
maximal} if $|N \setminus M| \le |M \setminus N|$ for any
matching $N$.
\end{definition}

The notion of strong maximality is closely related to duality
results. Namely, it is used to prove duality results, and
conversely, a main tool in proofs of existence of strongly maximal
matchings is duality theorems. In particular, Theorem
\ref{infmenger} is equivalent (in the sense of easy derivation, in
both directions) to the statement that in the hypergraph of
$A$--$B$ paths (a path being identified with its vertex set) there
exists a strongly maximal matching. The set $S$ in Theorem~\ref{infmenger} is a strongly {\em minimal cover} in this hypergraph,
where the notion of strong minimality is defined in an analogous
way. It is interesting to note that not every strongly minimal
separating set $S$ has a corresponding matching $F$ as in the
theorem. An example showing this is the bipartite graph $G$ with
sides $A$ and $B$, where $A=\{a_0,a_1,a_2, \ldots, \}$, \quad $B=
\{b_1,b_2, \ldots\}$, and $E(G)=\{(a_i,b_i) \mid 1\le i <\omega\}
\cup \{(a_0,b_i) \mid 1 \le i < \omega\}$. The side $A$ is a
strongly minimal separating set, but there is no $F$ corresponding
to it as in the theorem, since, easily, $A$ is unmatchable.

The main result of \cite{infinitetutte} implies:

\begin{theorem}\label{strongmaximailityingraphs} In any graph there exists a strongly maximal
matching. \end{theorem}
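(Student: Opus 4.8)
The plan is to route everything through the notion of an \emph{unaugmentable} matching, meaning a matching that admits no finite augmenting path (a finite alternating path both of whose endpoints are unsaturated and both of whose end-edges lie outside the matching). I would first show that every unaugmentable matching is already strongly maximal, and then obtain an unaugmentable matching in an arbitrary graph from the main result of \cite{infinitetutte}; the second step is where all the difficulty lies.

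For the first step, suppose $M$ is unaugmentable and let $N$ be any matching. I would analyse the symmetric difference $M \triangle N$, which is a subgraph of maximum degree $\le 2$ and hence a disjoint union of finite or infinite paths and cycles, each alternating between edges of $M$ and edges of $N$; note that $M \setminus N = \bigsqcup_C (M \cap C)$ and $N \setminus M = \bigsqcup_C (N \cap C)$, the unions ranging over the components $C$. The key claim is that $|M \cap C| \ge |N \cap C|$ for \emph{every} component $C$. This is immediate when $C$ is an even cycle, an even-length path, a ray, or a double ray (one gets equality, or both sides equal to $\aleph_0$), and when $C$ is an odd-length path whose two end-edges lie in $M$ one has $|M \cap C| = |N \cap C| + 1$. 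The only remaining case is an odd-length path whose end-edges lie in $N$, and here a short check — a path-endpoint of $M \triangle N$ whose incident $M \triangle N$-edge lies in $N$ meets no edge of $M$, hence is $M$-unsaturated — shows that such a $C$ would be a finite $M$-augmenting path, contrary to unaugmentability. Since cardinal addition is monotone, summing the inequality over all components yields $|N \setminus M| \le |M \setminus N|$, so $M$ is strongly maximal.

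It remains to produce an unaugmentable matching in every graph, and this is the real obstacle. The naive approach — order matchings by reverse inclusion of their sets of unsaturated vertices and apply Zorn's lemma — breaks down, because an increasing chain of matchings need not have an upper bound in this order (an increasing union of matchable vertex sets need not be matchable), so one cannot simply extract a maximal element. Instead I would invoke \cite{infinitetutte}: its main result produces, for every graph, a matching carrying a Tutte/Gallai--Edmonds-type structural certificate, and, exactly as in the finite theory, that certificate rules out finite augmenting paths. Hence the matching it provides is unaugmentable, and therefore, by the first step, strongly maximal. Thus the whole weight of the argument sits in \cite{infinitetutte}, the component bookkeeping of the first step being routine once one observes that the infinite components of $M \triangle N$ contribute equally to both sides, so that finite augmenting paths are the only obstructions. (In the bipartite case one could alternatively proceed through Theorem~\ref{infmenger}.)
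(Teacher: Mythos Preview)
Your proposal is correct and follows essentially the same route as the paper: reduce strong maximality to the absence of finite augmenting paths via the component analysis of $M\triangle N$ (this is Lemma~\ref{strongly_max_iff_no_finiteap}), then obtain such a matching from a structural decomposition. The paper carries out the second step explicitly, citing the infinite Gallai--Edmonds decomposition of \cite{aharoniziv} (Theorem~\ref{gallaiedmonds}) rather than \cite{infinitetutte}, building the matching from it, and giving the short path-tracing argument that you summarise as ``exactly as in the finite theory''.
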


As expected, the theorem follows from a duality result. The proof
will be given in Section~\ref{sec:smmatchings}. Beyond graphs very
little is known. The main conjectures on the notions of strong
maximality and strong minimality are the following:

\begin{conjecture}\label{smaxandminconj}
  In any hypergraph with finitely bounded size of edges there exists a strongly maximal matching and a strongly minimal cover of the vertex set by edges of the hypergraph.
\end{conjecture}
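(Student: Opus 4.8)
Since Conjecture~\ref{smaxandminconj} is open, we describe only the line of attack that seems most promising, and indicate where it gets stuck.

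The first step is a reduction to a purely local condition. Fix $k$ with $|e|\le k$ for all $e\in E(H)$, and call a matching $M$ \emph{locally optimal} if there is no finite matching $N'$ of $H$ with $N'\cap M=\emptyset$ for which the set $M'=\{e\in M: e\cap\bigcup N'\neq\emptyset\}$ satisfies $|M'|<|N'|$. I claim $M$ is strongly maximal if and only if it is locally optimal. If such $M',N'$ exist, then $(M\setminus M')\cup N'$ is a matching beating $M$, so one direction is clear. Conversely, given any matching $N$, form the bipartite ``conflict graph'' $\Gamma$ with vertex classes $M\setminus N$ and $N\setminus M$, joining two edges of $H$ when they share a vertex; since $N$ is a matching and $|e|\le k$, every vertex of $\Gamma$ has degree at most $k$, so a connected component of $\Gamma$ is finite as soon as either of its sides is finite, and on an infinite component both sides have cardinality $\aleph_0$. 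Hence if $|N\setminus M|>|M\setminus N|$, then by cardinal arithmetic some component $C$ has $|C\cap N|>|C\cap M|$, and such a $C$ is therefore finite; replacing the edges of $M$ in $C$ by the edges of $N$ in $C$ is a valid matching move (these $N$-edges meet no edge of $M$ outside $C$), and it exhibits the failure of local optimality. So it suffices to show that every hypergraph with edges of size at most $k$ has a locally optimal matching, and one sees that the bounded-size hypothesis is used precisely in the degree bound for $\Gamma$.

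To construct a locally optimal matching one would proceed by transfinite recursion along a well-ordering of $V(H)$ (or of $E(H)$), building an increasing chain of partial matchings and taking unions at limits. The difficulty is the usual one: one must carry an invariant strong enough to survive the limit, to the effect that every commitment already made is consistent with global local optimality and will never be overturned by a later finite improvement. For $k=2$ this is exactly what the infinite Tutte theorem underlying \cite{infinitetutte} provides — the Tutte/Gallai--Edmonds decomposition of the graph tells one, edge by edge, which local choices are globally safe. For $k\ge 3$ no such structure theorem is available, and furnishing a replacement — a min--max duality for maximum matchings in infinite hypergraphs of bounded edge size, in the complementary-slackness spirit of Theorem~\ref{infmenger} — is both the crux and the present obstruction. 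The natural attempt is induction on $k$: contract a carefully chosen hyperedge, or one of its vertices, apply the inductive hypothesis to the resulting $(k-1)$-bounded contraction, and lift the matching back; but making the contraction interact cleanly with the notion of finite local improvement is exactly the step I expect to fail without a genuinely new idea.

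For the second assertion, one shows by an entirely analogous conflict-graph argument that a cover $\mathcal C$ of $V(H)$ by edges of $H$ is strongly minimal if and only if it admits no finite ``reducing swap'', and then runs the same transfinite construction; alternatively one may hope to read strongly minimal covers off strongly maximal matchings in the auxiliary hypergraph on vertex set $E(H)$ whose edges encode, for each $v\in V(H)$, the possible ways of covering $v$ (again of bounded size), although the precise duality here is itself part of the conjecture. Either way, the single real obstacle is the absence of a Tutte-type structure theorem for hypergraphs.
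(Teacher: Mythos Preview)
The statement is listed in the paper as an open conjecture; the paper offers no proof and explicitly says that ``beyond graphs very little is known.'' You recognise this and, appropriately, do not claim a proof but sketch a reduction and explain where the argument stalls. So there is no proof in the paper to compare your proposal against.

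Your reduction to a local criterion is correct and worth recording: the bipartite conflict graph on $(M\setminus N)\sqcup(N\setminus M)$ does have maximum degree at most $k$ (each hyperedge has at most $k$ vertices, each lying in at most one edge of the other matching), hence every component is countable, and the cardinal-arithmetic step that locates a finite component with more $N$-edges than $M$-edges is sound. This is the hypergraph analogue of Lemma~\ref{strongly_max_iff_no_finiteap}, and it isolates exactly where the bounded-edge-size hypothesis enters.

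Where you say the argument gets stuck is also where it genuinely does: the transfinite construction needs an invariant certifying that each partial matching extends to a locally optimal one, and for $k=2$ this is precisely the content of the infinite Gallai--Edmonds decomposition (Theorem~\ref{gallaiedmonds}) used in the proof of Theorem~\ref{strongmaximailityingraphs}. For $k\ge 3$ no analogous structure theorem is known, and your proposed induction on $k$ via contraction has no obvious way to preserve the notion of a finite improving swap. In short, your analysis of the obstacle matches the state of the problem as the paper presents it; the conjecture remains open.
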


\begin{conjecture}
  \label{sminbyind}
  In every graph there exists a strongly minimal cover of the vertex set by independent sets.
\end{conjecture}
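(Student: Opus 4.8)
\medskip
\noindent\emph{An approach to Conjecture~\ref{sminbyind}.}\
If $\chi(G)=n<\infty$, the color classes of an optimal proper coloring already form a strongly minimal cover $\mathcal C$: any cover $\mathcal D$ of $V(G)$ by independent sets refines --- well-order $\mathcal D$ and replace each member by the vertices it contributes beyond its predecessors --- to a proper coloring with at most $|\mathcal D|$ classes, so $|\mathcal D|\ge n$, and as $|\mathcal C\cap\mathcal D|\le n$ is finite the identities $|\mathcal C|=|\mathcal C\cap\mathcal D|+|\mathcal C\setminus\mathcal D|$ and $|\mathcal D|=|\mathcal C\cap\mathcal D|+|\mathcal D\setminus\mathcal C|$ together with $|\mathcal D|\ge|\mathcal C|$ give $|\mathcal D\setminus\mathcal C|\ge|\mathcal C\setminus\mathcal D|$. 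So assume $\chi(G)=\kappa$ is infinite. I claim it then suffices to produce a partition $\mathcal C$ of $V(G)$ into independent sets such that $\chi(G[\bigcup F])\ge|F|$ for every $F\subseteq\mathcal C$. Indeed, given any cover $\mathcal D$ by independent sets, the union $\bigcup F$ of the subfamily $F:=\mathcal C\setminus\mathcal D$ is disjoint from every member of $\mathcal C\cap\mathcal D$, hence is covered by $\mathcal D\setminus\mathcal C$, so $|\mathcal C\setminus\mathcal D|=|F|\le\chi(G[\bigcup F])\le|\mathcal D\setminus\mathcal C|$. This condition is not automatic even when $\kappa=\aleph_0$: in $G=\bigsqcup_{n<\omega}K_n$, a proper coloring that spends a fresh block of colors on each $K_n$ has two color classes whose union is an independent set, and is beaten by merging them --- whereas the ``nested'' coloring, in which color $i$ meets exactly the cliques $K_n$ with $n\ge i$, does satisfy the condition, since any $k$ of its classes occur together inside $K_n$ once $n$ is large.

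The plan is to construct such a partition $\mathcal C$ by transfinite recursion, carrying along a certificate for the inequalities above: for each finite family $F$ of the color classes already defined, a \emph{finite} subgraph of $G[\bigcup F]$ that is not $(|F|-1)$-colorable --- the tangible witness that the de~Bruijn--Erd\H{o}s theorem lets us attach to each finite $F$; the infinite families $F$ will require a separate argument (see below). This is modelled on the proof of \Tr{strongmaximailityingraphs}, where \Tr{infmenger} is precisely the device that converts ``the matching admits no improvement'' into a complementary-slackness certificate. Fix an increasing continuous chain $(G_\alpha)_{\alpha<\kappa}$ of subgraphs of $G$, each of size $<\kappa$, with $\bigcup_\alpha G_\alpha=G$; at stage $\alpha$ one holds a proper coloring $\mathcal C_\alpha$ of $G_\alpha$, with colors drawn from a fixed set of size $\kappa$, equipped with the finite witnesses above. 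The successor step must extend $\mathcal C_\alpha$ to a proper coloring $\mathcal C_{\alpha+1}$ of $G_{\alpha+1}$ that keeps all the witnesses alive, after local recoloring if necessary; the tool envisaged here is the infinite augmenting-path apparatus behind \Tr{strongmaximailityingraphs} and \Tr{infmenger}, applied to an auxiliary graph that encodes the choice ``which color to push onto which new vertices so as to keep every finite family of classes chromatically tight'' as an augmentation problem. At a limit ordinal $\lambda$ one puts $\mathcal C_\lambda=\bigcup_{\alpha<\lambda}\mathcal C_\alpha$, again a proper coloring since a chain of independent sets is independent, and each finite witness survives since it lives in some $G_\alpha$. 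The partition $\mathcal C=\mathcal C_\kappa$ then meets the condition singled out above, and is therefore a strongly minimal cover.

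The principal obstacle is the unboundedness of the independent sets --- which is exactly why Conjecture~\ref{smaxandminconj} is stated only for hypergraphs whose edges have bounded size. With bounded edges one has genuine compactness, and a strongly maximal matching or a strongly minimal cover can be pieced together from finite approximations by an inverse-limit argument; a color class, by contrast, is typically infinite, and two difficulties ensue. First, there is no finitary certificate for strong minimality itself --- the property ranges over all covers $\mathcal D$ --- and the dual object one would naturally invoke, a ``fractional clique'' tight on each color class in use (the complementary-slackness partner of an optimal fractional coloring), does not carry over to infinite graphs in the required strength: already the finite identity $\omega(G)=\chi(G)$ fails (odd cycles, Mycielski graphs), so the correct dual must be subtler than a clique, and an infinite duality theorem for colorings, playing for colorings the role that \Tr{infmenger} plays for flows, is simply not known. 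Second, even granting such a theorem, the successor step is genuinely delicate: restoring all the finite witnesses at stage $\alpha+1$ may force recoloring infinitely many earlier classes, so the extension must be chosen with enough uniformity that the recursion converges --- a problem with no counterpart while the objects being manipulated remain finite; and one must separately arrange that the construction discharges the infinite families $F$ as well (e.g.\ by keeping the partial colorings optimal throughout), a point least transparent when $\kappa$ is singular. In outline, then: reduce to infinite chromatic number; run the transfinite construction with the finite non-colorability witnesses as the invariant; and bridge the successor and limit steps by a coloring analogue of \Tr{infmenger}. It is this last ingredient --- an infinite duality theorem for graph colorings --- that is the true heart of the matter, and it is itself open.
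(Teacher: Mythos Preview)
This statement is an open conjecture in the paper; the paper offers no proof of it, so there is nothing to compare your proposal against. You clearly know this --- you title your text ``an approach'' and close by identifying the missing ingredient (an infinite duality theorem for colorings) as itself open.

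The parts you actually argue are correct. The finite-chromatic-number case is sound: an optimal proper coloring with $n=\chi(G)$ classes is strongly minimal by exactly the cardinality arithmetic you give, and the refinement of an arbitrary cover $\mathcal D$ to a proper coloring with at most $|\mathcal D|$ classes is legitimate. The reduction in the infinite case --- that a partition $\mathcal C$ satisfying $\chi(G[\bigcup F])\ge|F|$ for every $F\subseteq\mathcal C$ is strongly minimal --- is also correct and is a clean reformulation; your example $\bigsqcup_n K_n$ shows neatly that the condition is not automatic even for $\chi(G)=\aleph_0$.

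Beyond the reduction, however, what you have is a program rather than a proof, and you say so yourself. The transfinite recursion has no mechanism for the successor step: the ``auxiliary graph that encodes the choice \dots\ as an augmentation problem'' is never defined, and invoking the machinery behind Theorems~\ref{infmenger} and~\ref{strongmaximailityingraphs} by analogy is not a construction. The treatment of infinite subfamilies $F$ is deferred and never delivered, and you give no argument that the local recolorings at successor stages can be made coherent across the chain. These are precisely the hard parts; the conjecture remains open, in the paper and in your write-up alike.
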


An interesting conjecture that would follow from a positive answer to Conjecture~\ref{sminbyind} is the following:

\begin{conjecture} \label{fishscale} In any poset of bounded width
there exists a chain $C$ and a partition of the vertex set into
independent sets, all meeting $C$.
\end{conjecture}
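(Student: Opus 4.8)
The plan is to deduce the statement from Conjecture~\ref{sminbyind}, as the remark preceding it suggests. Let $P$ be a poset of finite width $w$ and let $G$ be its comparability graph, whose independent sets are exactly the antichains of $P$; by Dilworth's theorem $P$ is a union of $w$ chains, so every antichain of $P$ has at most $w$ elements. Apply Conjecture~\ref{sminbyind} to $G$ to obtain a strongly minimal cover $\mathcal D$ of $P$ by antichains. Such a cover has no redundant member, since deleting one would violate the defining inequality taken with $\mathcal E=\mathcal D\setminus\{D\}$; consequently, disjointifying $\mathcal D$ along a well-order leaves every member nonempty and does not change its cardinality. Granting that this disjointification is again strongly minimal --- a point that needs some care, but that I expect to go through --- we may assume $\mathcal D=\{A_i:i\in I\}$ is a strongly minimal \emph{partition} of $P$ into antichains.

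In this situation the statement reduces to the following, which is the infinite analogue of the elementary fact that a longest chain of a finite poset meets every class of a minimum-size partition into antichains:
\begin{claim}
Some chain $C$ of $P$ meets every $A_i$.
\end{claim}
Indeed, once we have the claim we are done: as $\mathcal D$ is a partition and $C$ a chain, $C$ meets each $A_i$ in exactly one point, so the pair $(\mathcal D,C)$ is precisely the configuration the statement demands.

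All the difficulty sits in the claim, which I would attack by an exchange (augmenting-path) argument of the kind underlying the ``strongly maximal/minimal'' results cited above. Suppose the claim fails and let $C$ be a maximal chain; then $C$ misses some part $A_0$. Each $v\in A_0$ is incomparable to some point of $C$ --- otherwise $C\cup\{v\}$ would be a chain, contradicting maximality --- and the points of $C$ incomparable to $v$ form an interval of $C$; one then attempts to redistribute the at most $w$ elements of $A_0$, together with whatever their insertion displaces, into the neighbouring classes along $C$, so as to obtain an antichain cover $\mathcal E$ with $|\mathcal D\setminus\mathcal E|>|\mathcal E\setminus\mathcal D|$, contradicting strong minimality. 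The decisive obstacle, I expect, is making this redistribution terminate: preventing the cascade of displacements from running off to infinity along $C$, and controlling the limit stages. This is exactly where one must use the bounded width, which caps class sizes at $w$, together with the full strength of strong minimality rather than mere irredundancy.
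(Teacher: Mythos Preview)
The statement you are addressing is a \emph{conjecture}; the paper does not prove it. The only thing the paper asserts is that it would follow from a positive answer to Conjecture~\ref{sminbyind}, and even that implication is stated without argument. So there is no ``paper's own proof'' to compare against.

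Your proposal is an attempt to spell out the implication Conjecture~\ref{sminbyind} $\Rightarrow$ Conjecture~\ref{fishscale}. As written it is not a proof but a programme, and the programme has a real gap precisely where you locate it yourself. After reducing to the Claim that some chain meets every class of a strongly minimal antichain partition, you offer only a heuristic: take a maximal chain $C$, find a missed class $A_0$, and try to redistribute $A_0$ into adjacent classes along $C$ so as to produce a cover $\mathcal{E}$ with $|\mathcal{D}\setminus\mathcal{E}|>|\mathcal{E}\setminus\mathcal{D}|$. But you give no mechanism that forces this exchange to be \emph{finite}, and strong minimality says nothing when both symmetric differences are infinite. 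The bounded width caps the size of each antichain, not the length of a cascade of displacements along $C$, so the sentence ``this is exactly where one must use the bounded width'' is an expression of hope rather than an argument. Until you can show that the failure of the Claim yields a \emph{finite} improvement of $\mathcal{D}$, the reduction does not go through.

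There is also a smaller loose end earlier: you assert that disjointifying a strongly minimal cover along a well-order yields again a strongly minimal cover. Irredundancy does guarantee that no class becomes empty, but preservation of strong minimality is not automatic, since the disjointified classes are proper subsets of the original ones and an arbitrary competing cover $\mathcal{E}$ may agree with many $D_i$ while agreeing with none of the $A_i$. You flag this yourself, but it is another point that would need an actual argument.
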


In this paper we are going to extend Theorem
\ref{strongmaximailityingraphs} to graphs with weighted edges. Here and throughout the paper, for a set $F$ of edges we define $w[F]:=\sum_{e \in F}w(e)$. Let $G$ be a graph and $w: E(G) \to \R$ an assignment of weights to the edges of $G$ fixed throughout this section.

\begin{definition} A
matching $M$ in $G$ is called {\em strongly $w$-maximal} if
$w[N\setminus M] \le w[M \setminus N]$ for any matching $N$ in $G$ with
$|M \setminus N|,|N \setminus M|<\infty$.
\end{definition}

\begin{theorem}\label{main} If $w$ assumes only finitely many
values all of which are rational, then $G$ has a strongly $w$-maximal
matching.
\end{theorem}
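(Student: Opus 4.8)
The plan is to reduce the weighted statement to the unweighted Theorem~\ref{strongmaximailityingraphs} by a \emph{scaling and blow-up} construction. Since $w$ takes only finitely many rational values, after multiplying all weights by a common denominator we may assume $w: E(G) \to \N$, and then capping at the maximum value we may assume $w(e) \in \{1, 2, \ldots, k\}$ for some fixed $k$. The idea is to build an auxiliary graph $G'$ in which each edge $e$ of $G$ is replaced by a gadget that "costs" $w(e)$ in cardinality terms, so that a cardinality-strongly-maximal matching of $G'$ pulls back to a $w$-strongly-maximal matching of $G$.

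\textbf{Step 1: the gadget.} For each edge $e = uv$ of $G$ with $w(e) = m$, I would replace $e$ by a path, or more robustly a ``ladder'' gadget, joining $u$ and $v$ through $2m-1$ (or $2m$, depending on parity bookkeeping) new internal vertices, chosen so that (a) every matching of $G'$ uses, inside the gadget, either a set of internal edges only, or a set together with the two end-edges at $u$ and $v$ in a way that commits $u$ and $v$; and (b) ``selecting'' $e$ versus ``not selecting'' $e$ changes the number of gadget-edges in the matching by exactly $m$. The cleanest choice: replace $e$ by a path $u\,x_1\,x_2\cdots x_{2m-1}\,v$. A maximal matching inside such a path either matches all $2m-1$ internal vertices together with exactly one endpoint (using $m$ edges and leaving the other endpoint free for $G$ outside), or matches $u,v$ out of the gadget and then the interior path on $2m-1$ vertices contributes $m-1$ edges. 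So ``using $e$'' corresponds to $m-1$ interior edges plus one crossing half, and ``not using $e$'' to $m$ interior edges: a difference of exactly $1$, which is wrong. So I would instead use a path of length $2m$ (i.e.\ $2m-1$ interior vertices is replaced by $2m+1$, or attach a pendant structure) engineered so the swing is exactly $m$; the precise count is a routine parity calculation I would not grind through here, but the design principle is: the gadget is a tree (so matchings within it are easy to analyse) whose unique ``endpoint-free'' maximal matching and unique ``endpoint-used'' maximal matching differ in size by $w(e)$.

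\textbf{Step 2: pull-back.} Apply Theorem~\ref{strongmaximailityingraphs} to $G'$ to get a strongly maximal matching $M'$. First argue $M'$ may be taken to be maximal \emph{within each gadget} (otherwise a finite local modification strictly increases $M'$ on that gadget without affecting others, contradicting strong maximality against a matching differing from $M'$ in finitely many edges). Hence $M'$ induces, for each $e \in E(G)$, a well-defined state ``$e$ selected'' or ``$e$ unselected'', and the selected edges form a matching $M$ of $G$. Given any matching $N$ of $G$ with $|M\setminus N|,|N\setminus M|<\infty$, build the corresponding $N'$ in $G'$ (selecting exactly the edges of $N$, maximal in each gadget); then $|N'\setminus M'|$ and $|M'\setminus N'|$ are finite, and by construction $|N'\setminus M'| - |M'\setminus N'| = \sum_{e\in N\setminus M} w(e) - \sum_{e \in M\setminus N} w(e) = w[N\setminus M] - w[M\setminus N]$. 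Strong maximality of $M'$ gives $|N'\setminus M'|\le |M'\setminus N'|$, hence $w[N\setminus M]\le w[M\setminus N]$, i.e.\ $M$ is strongly $w$-maximal.

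\textbf{Main obstacle.} The delicate point is making the gadget genuinely \emph{rigid}: I need that every maximal (hence every ``locally maximal'') matching of $G'$ restricts on each gadget to one of exactly two behaviours, with the stated size difference, and that no ``intermediate'' matching of the gadget that half-commits an endpoint can sneak in and break the correspondence or the arithmetic. With a bare path this fails (a path has many maximal matchings of differing endpoint-commitments, and the size swing is only $1$); so the real work is choosing a gadget --- plausibly several disjoint paths between $u$ and $v$, or a path with a controlled pendant gadget at a central vertex --- for which the two extreme behaviours are the only \emph{locally maximal} ones and the swing is exactly $w(e)$. I would also need to double-check that the reduction to finitely many integer values is harmless (clearing denominators and the cap at $\max w$ do not change which matchings are strongly maximal, since the strong-maximality inequality is homogeneous and only finitely many edges are involved in any comparison), and that Theorem~\ref{strongmaximailityingraphs} applies to $G'$ --- which it does, as $G'$ is just a graph.
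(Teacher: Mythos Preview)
Your reduction has a structural obstruction that no choice of gadget can overcome, and it is more serious than the ``rigidity'' issue you flag. Suppose $H_e$ is any gadget attached to the original vertices $u$ and $v$ (with all other vertices of $H_e$ new). Let $\nu_{\mathrm{sel}}$ be the maximum size of a matching of $H_e$ that covers both $u$ and $v$, and let $\nu_{\mathrm{unsel}}$ be the maximum size of a matching of $H_e$ that covers neither. Given any matching realising $\nu_{\mathrm{sel}}$, delete the one or two edges incident with $u$ and $v$; what remains is a matching of $H_e-\{u,v\}$, hence of size at most $\nu_{\mathrm{unsel}}$. Thus $\nu_{\mathrm{sel}}-\nu_{\mathrm{unsel}}\le 2$ for \emph{every} gadget in which $u$ and $v$ remain single vertices. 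So the ``swing'' you need to equal $w(e)$ can never exceed $2$, and your construction cannot encode any weight $m\ge 3$. Multiple parallel paths, pendants, ladders --- none of this helps, because the bound is purely combinatorial: committing $u$ and $v$ buys you at most two extra matching edges. To rescue the idea you would have to blow up the \emph{vertices} of $G$ as well, but then ensuring that a strongly maximal matching of $G'$ projects to a genuine matching of $G$ (each blown-up vertex committing to at most one incident edge) becomes a real problem, and nothing in your outline addresses it.

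The paper takes an entirely different route: it first proves Theorem~\ref{sminpm}, the existence of a strongly $w$-\emph{minimal} (almost) perfect matching in a complete graph, by adapting Edmonds' primal-dual blossom algorithm to the infinite setting (maintaining a laminar family with potentials satisfying~\eqref{eq:nonnegative}--\eqref{cond:uam}, contracting not just odd cycles but certain uniformly almost matchable sets, and using Lemma~\ref{stronglymaxcontainingsupport} at each step in place of augmenting paths). Theorem~\ref{main} then follows in a few lines: complete $G$ with zero-weight edges, negate the weights, apply Theorem~\ref{sminpm}, and restrict the resulting matching back to $E(G)$. The rationality and finiteness hypotheses are what make the potential increments $\tfrac12$ terminate in boundedly many steps; this is where they are genuinely used, not merely to clear denominators.
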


On the way to the proof of Theorem \ref{main} we shall prove:

\begin{theorem}\label{sminpm}
Suppose that $G$ is complete and $w$ assumes only
finitely many values all of which are rational. Then there exists a strongly $w$-minimal perfect matching, or a strongly $w$-minimal almost perfect
matching.
\end{theorem}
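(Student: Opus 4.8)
\emph{Normalisation.} Since $w$ takes only finitely many rational values, I would first multiply $w$ by a common denominator of those values and then add a large enough constant, obtaining a weight function $w'\colon E(G)\to\{1,2,\dots,k\}$ for some $k\in\N$; it suffices to find a strongly $w'$-minimal matching. Multiplying by a positive scalar preserves all the inequalities involved, and adding a constant $c$ does too: if $M,N$ are both perfect, or both almost perfect, and $|M\setminus N|,|N\setminus M|<\infty$, then $M\triangle N$ is a finite union of finite alternating cycles together with at most one finite alternating path joining the two uncovered vertices, so $|M\setminus N|=|N\setminus M|$ and $w[N\setminus M]\le w[M\setminus N]$ is unaffected by the shift. (A perfect matching and an almost perfect one always differ in infinitely many edges, so the two are never compared; and a complete graph has a perfect matching unless its vertex set is finite and of odd size, in which case it has an almost perfect one, so the ``or'' in the statement is dictated by the parity of $|V(G)|$.)

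\emph{A local criterion.} A perfect or almost perfect matching $M$ is strongly $w$-minimal if and only if there is no finite alternating cycle $C$ with $w[C\cap M]>w[C\setminus M]$ and no finite alternating path $P$ between the two $M$-uncovered vertices with $w[P\cap M]>w[P\setminus M]$. This follows from the same decomposition of $M\triangle N$: the quantity $w[M\setminus N]-w[N\setminus M]$ is additive over its components, so $M$ fails to be strongly $w$-minimal exactly when a single component beats it, and flipping $M$ along one such component produces a competitor. In particular the finite case of the theorem is immediate, a minimum-weight perfect (respectively almost perfect) matching of a finite complete graph having no such cycle or path.

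The infinite case is where the work lies, and it cannot be reduced to the unweighted Theorem~\ref{strongmaximailityingraphs} by a local substitution: a short counting argument shows that no gadget on two terminals can make the use of an edge cost more than one unit of matching size (covering two more vertices costs exactly one more edge, and adjoining one vertex raises the matching number by at most one), so weights of value $\ge 3$ cannot be encoded locally. Instead I would redo the argument behind Theorem~\ref{strongmaximailityingraphs} --- i.e.\ the infinite Gallai--Edmonds/Tutte machinery of \cite{infinitetutte} --- with the weights carried through it, seeking by a maximality (Zorn-type) argument a perfect or almost perfect matching $M$ together with a ``weighted Tutte barrier'' (vertex potentials tight on $M$, coupled with the usual structural sets) certifying that $M$ admits no decreasing alternating cycle or path; completeness of $G$ ensures the object produced is perfect unless $|V(G)|$ is finite and odd.

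The step I expect to be the main obstacle is the passage to the limit in this maximality argument: one must choose the order on matchings so that increasing chains have upper bounds and so that a maximal element carries the required barrier, and here, as always in infinite matching theory, topological compactness is useless since ``being a perfect matching'' is not closed in $2^{E(G)}$. This is exactly where the hypothesis that $w$ take finitely many \emph{rational} values is needed: after normalisation the weights are bounded positive integers, so a would-be infinite sequence of improving flips is forced to spread out, and one may hope to perform the limit locally, controlling only finitely much weight at a time, and to synchronise it with the barrier so that no decreasing alternating structure survives anywhere. Making this rigorous is the heart of the proof, and I would expect it to require the full complementary-slackness form of the structural result of \cite{infinitetutte}, not merely the bare existence of a strongly maximal matching.
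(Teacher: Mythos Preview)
Your normalisation and local criterion are correct, and you rightly anticipate that some form of complementary slackness (vertex-set potentials) will certify optimality. However, the proposal is not a proof: you explicitly leave ``making this rigorous'' as the heart of the argument, and the Zorn-type limit you envisage is neither carried out nor, as it turns out, the right tool.

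The paper's proof is not a transfinite maximality argument but an adaptation of Edmonds' primal--dual blossom algorithm that terminates after \emph{finitely many} steps --- at most $\max_e w(e)$ of them once the weights are positive integers. At each step $i$ one maintains a laminar family $\Omega_i$ of nested vertex sets carrying half-integral potentials $\pi_i$, a ``tight'' graph $G_i$ consisting of the edges satisfying equality in the dual constraint $\sum_{U:e\in\delta(U)}\pi_i(U)\le w(e)$, and a strongly maximal matching $M_i$ in the graph $G'_i$ obtained from $G_i$ by contracting the maximal sets of $\Omega_i$. If $M_i$ is perfect or almost perfect one stops; otherwise one contracts certain uniformly almost matchable subgraphs reachable by alternating walks from the unmatched vertices (the infinite analogue of blossoms, possibly infinite themselves), raises their potentials by $\tfrac12$, lowers those of the odd-distance sets by $\tfrac12$, decontracts any non-singleton set whose potential has dropped to zero, and invokes Lemma~\ref{stronglymaxcontainingsupport} to produce a new strongly maximal matching $M_{i+1}$ whose support contains that of the old one. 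The key point you did not see is termination: each persistently unmatched vertex accrues energy $\tfrac{i}{2}$ at step $i$, so once $i=\max_e w(e)$ any two such vertices are joined by a tight edge, contradicting strong maximality of $M_i$. This is precisely where integrality (hence rationality) of the weights enters, and it means no passage to a limit is ever required --- the obstacle you single out simply does not arise.

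Two smaller remarks. First, your parenthetical that the ``or'' is dictated by the parity of $|V(G)|$ holds only for finite $G$; an infinite complete graph has both perfect and almost perfect matchings, and the algorithm may legitimately output either. Second, the sets contracted are not odd cycles but arbitrary (possibly infinite) uniformly almost matchable subgraphs, and establishing that the relevant sets have this property is a substantial part of the paper's argument; the unweighted Gallai--Edmonds structure is invoked only indirectly, via Theorem~\ref{strongmaximailityingraphs} and Lemma~\ref{stronglymaxcontainingsupport}, not re-proved with weights as you propose.
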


A strongly $w$-minimal perfect or almost perfect matching $M$ is a perfect or almost perfect matching that is strongly $w$-minimal (which is defined analogously to strongly $w$-maximal) among all perfect and almost perfect matchings in $G$ (i.e.\ there is no perfect or almost perfect matching $N$ with $|M \setminus N|,|N \setminus M|<\infty$ and $w[N\setminus M] < w[M \setminus N]$). Note that such a matching will, in general, not be strongly $w$-minimal among all matchings in $G$.

As we shall see, \Tr{sminpm} is best possible in the sense that
it false if we allow irrational weights or if we demand the matching to be perfect rather than almost perfect.

\section{Definitions}

We will be using the terminology of \cite{diestelBook05}.

The {\em support} of a matching $M$, denoted by $supp(M)$, is the
set of vertices incident with $M$.

Let $M$ be a matching. A path or a cycle $P$ is said to be
$M$-{\em alternating} if one of any two adjacent edges on $P$
lies in $M$.
An $M$-alternating path $Q$ is said to be
{\em finitely improving} (or {\em finitely $M$-improving}) if it
is finite and both its endpoints do not belong to $supp(M)$. It
is said to be {\em infinitely improving} (or {\em infinitely
$M$-improving}) if it is infinite, has one endpoint, and this
endpoint does not belong to $supp(M)$. It is said to be {\em
$M$-indifferent} if it is either two way infinite or it is finite
and has one endpoint in $supp(M)$ and one endpoint outside
$supp(M)$.

Given two matchings $M$ and $N$, a path or cycle is said
to be $M$--$N$-{\em alternating} if it is both $M$-alternating and
$N$-alternating. For example, an $M$--$N$-alternating path may consist
of only one edge belonging to both $M$ and $N$.

Given to sets $K$, $L$ of edges, their \emph{symmetric difference} is the set $K \triangle L := (K \cup L) \setminus (K\cap L)$.

A graph $C$ is called {\em almost matchable} if $C-v$ has a
perfect matching for some $v \in V(C)$. It is called {\em
uniformly almost matchable} if $C-v$ has a perfect matching for
{\em every} $v \in V(C)$.

For a graph $G$ and a set of vertices $U$ of $G$ we write $G[U]$ for the subgraph of $G$ induced by the vertices in $U$.

\section{Strongly maximal matchings in graphs}
\label{sec:smmatchings}

In this section we prove \Tr{strongmaximailityingraphs} and develop some tools for the proof of \Tr{main}.

\begin{lemma}\label{strongly_max_iff_no_finiteap}
A matching $M$ is strongly maximal if and only if there does not
exist a finitely improving $M$-alternating path.
\end{lemma}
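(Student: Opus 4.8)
The plan is to prove both directions by an easy contrapositive argument, making use of the symmetric-difference structure of two matchings. For the forward direction, suppose $Q$ is a finitely improving $M$-alternating path. Its endpoints lie outside $supp(M)$, so $Q$ has odd length and contains one more edge outside $M$ than inside $M$. Set $N := M \triangle E(Q)$. One checks that $N$ is again a matching: the only vertices whose incidence could change are those on $Q$, and along an $M$-alternating path whose endpoints are $M$-unsaturated, toggling the edges of $Q$ keeps every internal vertex matched exactly once and saturates the two former endpoints. Since $N \setminus M = E(Q) \setminus M$ and $M \setminus N = E(Q) \cap M$, we get $|N \setminus M| = |M \setminus N| + 1 > |M \setminus N|$, and both are finite, so $M$ is not strongly maximal.

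For the converse, suppose $M$ is not strongly maximal, and pick a matching $N$ with $|N \setminus M|, |M \setminus N| < \infty$ and $|N \setminus M| > |M \setminus N|$. Consider the symmetric difference $H := M \triangle N$, a graph with maximum degree at most $2$ (each vertex meets at most one edge of $M$ and at most one of $N$), hence a disjoint union of paths and cycles, each of which is $M$--$N$-alternating. Every cycle, and every two-way-infinite path, contributes equally many edges to $N \setminus M$ and to $M \setminus N$; a one-way-infinite path contributes infinitely many edges to $E(H) = (N \setminus M) \cup (M \setminus N)$, which is impossible here since that set is finite. So $H$ consists of finitely many components, all finite paths and cycles, and the strict inequality $|N \setminus M| > |M \setminus N|$ forces at least one finite path component $Q$ to contain strictly more $N$-edges than $M$-edges. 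Such a $Q$ has odd length with both end-edges in $N \setminus M$; its two endpoints are therefore $M$-unsaturated (an endpoint saturated by $M$ would have its $M$-edge lying on $Q$, hence in $M \setminus N$, contradicting that the end-edge of $Q$ is in $N$ and $Q$ is $M$-alternating at that vertex). Thus $Q$ is a finitely improving $M$-alternating path.

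I expect the routine verifications — that $M \triangle E(Q)$ is a matching in the first part, and that the relevant endpoints are genuinely outside $supp(M)$ in the second — to be the only places needing care; the overall argument is a standard augmenting-path manipulation. The one genuinely infinite-combinatorial point, which I would state explicitly, is that the finiteness of $N \triangle M$ rules out one-way-infinite alternating paths in $H$, so that the counting of edges in components behaves exactly as in the finite case and the deficit $|N \setminus M| - |M \setminus N|$ must be localized in a single finite path component.
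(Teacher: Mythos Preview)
Your approach is the same as the paper's---decompose $M \triangle N$ into alternating paths and cycles, then localize the strict inequality to a single component---but there is a gap in the converse direction. The paper's definition of \emph{strongly maximal} requires $|N \setminus M| \le |M \setminus N|$ for \emph{every} matching $N$, with no finiteness restriction; only the weighted notion \emph{strongly $w$-maximal} carries the hypothesis $|M \setminus N|,\,|N \setminus M| < \infty$. So when you write ``pick a matching $N$ with $|N \setminus M|, |M \setminus N| < \infty$ and $|N \setminus M| > |M \setminus N|$'' you are assuming more than the failure of strong maximality gives you, and your dismissal of one-way-infinite components (``impossible here since that set is finite'') rests precisely on this unjustified finiteness.

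The repair is immediate and is what the paper does: drop the finiteness assumption and observe that in every cycle and in every infinite path component $Q$ of $M \triangle N$ one has $|N \cap E(Q)| = |M \cap E(Q)|$ (both sides being $\aleph_0$ for infinite paths). Hence if $|N \setminus M| > |M \setminus N|$ as cardinals, some component $Q$ must satisfy $|N \cap E(Q)| > |M \cap E(Q)|$, and such a $Q$ is forced to be a finite path with both end-edges in $N \setminus M$. Your endpoint argument then applies. (A minor additional point: your parenthetical asserts that an $M$-saturated endpoint would have its $M$-edge lying on $Q$; you should also exclude the possibility that this $M$-edge lies in $M \cap N$ and hence off $Q$---but that case is ruled out because the endpoint already meets an edge of $N \setminus M$ and $N$ is a matching.)
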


\begin{proof}
If $P$ is a finitely improving $M$-alternating path then the
matching $M \triangle E(P)$ witnesses the fact that $M$ is not
strongly maximal. For the converse, assume that $M$ is not
strongly maximal, namely there exists a matching $N$ such that $|N
\setminus M| > |M \setminus N|$. It is easy to see that $M \triangle N$ spans a set $\cf$ of $M$--$N$ alternating paths and cycles. Now $N \setminus M=\bigcup_{Q\in
\cf}(N\cap E(Q) \setminus M \cap E(Q))$ and $M \setminus
N=\bigcup_{Q\in \cf}(M\cap E(Q) \setminus N \cap E(Q))$, thus the
inequality $|N \setminus M| > |M \setminus N|$ implies the
existence of a path $Q$ in $\cf$ such that $|N\cap E(Q)|>|M \cap
E(Q)|$. Then, $Q$ is a finitely improving $M$-alternating path.
\end{proof}

We will use the following
result from \cite{aharoniziv}, stating that the classical
Gallai-Edmonds decomposition theorem is valid also for infinite
graphs. A graph $C$ is called {\em factor
critical} if it is uniformly almost matchable but does not
have a perfect matching.

\begin{theorem}\label{gallaiedmonds}
In any graph $G$ there exists a set of vertices
$T$, a set $\cf$ of factor critical components of $G-T$, and
an injective function $F: T \to \cf$ such that
\begin{enumerate}
\item for every $t
\in T$ there exists a vertex $v(t)$ of $F(t)$ connected to
$t$ in $G$, and
\item $G-T-\bigcup_{F \in \cf} V(F)$ has a perfect
matching.
\end{enumerate}
\end{theorem}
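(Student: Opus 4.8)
The statement to be proved is Theorem~\ref{gallaiedmonds}, the extension of the Gallai--Edmonds structure theorem to arbitrary (possibly infinite) graphs; my plan follows the approach of \cite{aharoniziv}, and the key is to build the set $T$ and the family $\cf$ by transfinite recursion, mimicking the way the Gallai--Edmonds set is obtained in the finite case as the set of vertices missed by some maximum matching, or dually via the Edmonds--Gallai ``deficiency'' vertices. First I would fix a strongly maximal matching $M$ in $G$, which exists by \Tr{strongmaximailityingraphs} (or, to avoid circularity if this theorem is needed to \emph{prove} that one, I would instead take $M$ to be any matching with no finitely improving alternating path, constructed directly by a Zorn's-lemma argument on the partial order of matchings ordered by ``$N$ extends $M$ if $N \triangle M$ is a union of augmenting-type paths each improving $N$ over $M$''). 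Call a vertex $v$ \emph{inessential} if $v \notin supp(M)$ or there is an $M$-indifferent alternating path from $v$ to a vertex outside $supp(M)$; equivalently $v$ is reachable from an unmatched vertex by an even-length alternating path. Let $D$ be the set of inessential vertices, let $T$ be the set of vertices of $V(G)\setminus D$ that have a neighbour in $D$, and let $\cf$ be the set of components of $G[D]$.

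The heart of the argument is then to verify the three structural facts, exactly as in the finite Gallai--Edmonds theorem but with care about infinite alternating paths. First, each component $C \in \cf$ is factor critical: using that $M$ has no finitely improving alternating path, one shows that every vertex of $C$ is missed by some matching of $C$ obtained from $M \cap E(C)$ by rerouting along an even alternating path inside $C$, and that no two vertices of $C$ are missed simultaneously (else one would splice the two even alternating paths into a finitely improving path in $G$, using a neighbour in $D$ and the matching structure). The delicate point here, and the one I expect to be the main obstacle, is precisely this last splicing: in the infinite setting an ``even alternating path from an unmatched vertex'' need not terminate, so one must argue that the set $D$ can be described by \emph{finite} reachability and that the relevant paths can always be taken finite --- this is where strong maximality (no \emph{finitely} improving path) does the work, ruling out the bad infinite configurations, and it is the step that genuinely differs from the finite proof. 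Second, $G - T - \bigcup_{C \in \cf} V(C)$ has a perfect matching: the vertices outside $D \cup T$ are matched by $M$ to vertices outside $D$ (any $M$-edge from such a vertex into $D$ would make its endpoint inessential), and no $M$-edge joins $V(G)\setminus(D\cup T)$ to $D\cup T$, so $M$ restricted to $V(G)\setminus(D\cup T)$ is a perfect matching there.

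The remaining combinatorial core is the construction of the injection $F : T \to \cf$ with $t$ adjacent to $v(t) \in F(t)$. This is a matching/Hall-type argument on the bipartite ``incidence'' structure between $T$ and $\cf$: for every finite $S \subseteq T$, the set $\cf_S$ of components of $G[D]$ having a neighbour in $S$ must satisfy $|\cf_S| \ge |S|$, for otherwise, contracting each component of $\cf_S$ to its (factor-critical, hence near-perfectly-matchable) core, a deficiency in the finite bipartite graph on $S \cup \cf_S$ would yield a finitely improving $M$-alternating path through $S$ and the bad components --- contradicting strong maximality of $M$. One then needs a version of Hall's theorem / König's theorem valid for the (possibly infinite) bipartite graph between $T$ and $\cf$ that, under the stated deficiency condition, produces an injective $F$; here I would invoke the infinite matching theory of \cite{aharoniziv} itself (the Aharoni--Nash-Williams--Shelah theorem, or the fact that in a bipartite graph with no ``obstruction'' one side can be fully matched), noting that each component in $\cf$ need only receive one vertex of $T$, so no capacity issue arises. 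Assembling these pieces --- factor-criticality of the components, the perfect matching off $T \cup \bigcup \cf$, and the injection $F$ --- yields exactly the three asserted properties, completing the proof.
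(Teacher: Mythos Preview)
The paper does not prove Theorem~\ref{gallaiedmonds}: it is quoted as a result from \cite{aharoniziv} and then \emph{used} to prove Theorem~\ref{strongmaximailityingraphs}. So there is no proof in the paper to compare against, but your proposal has a structural problem worth naming.

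Your primary approach is circular. You begin by fixing a strongly maximal matching $M$, appealing to Theorem~\ref{strongmaximailityingraphs}; but in this paper that theorem is \emph{deduced from} Theorem~\ref{gallaiedmonds}, not the other way round. You anticipate this and offer a fallback: obtain a matching with no finitely improving alternating path ``directly by a Zorn's-lemma argument.'' That step is the entire difficulty. The partial order you sketch (``$N$ extends $M$ if $N\triangle M$ is a union of augmenting-type paths each improving $N$ over $M$'') does not obviously admit upper bounds for chains: the union over an increasing chain of matchings need not be a matching, and even when it is, there is no reason a finitely improving path cannot appear in the limit. Producing such an $M$ without already having some Gallai--Edmonds-type structure is, in effect, the content of the theorem you are trying to prove; this is exactly why the paper imports the result from \cite{aharoniziv} and derives strong maximality from it rather than vice versa. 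Note also that for the injection $F:T\to\cf$ you explicitly invoke ``the infinite matching theory of \cite{aharoniziv} itself'' --- but that is precisely the source being cited for Theorem~\ref{gallaiedmonds}, so this step is a pointer to the original proof rather than an independent argument.

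If one \emph{grants} the existence of a strongly maximal $M$, the remainder of your outline (define $D$ as the vertices reachable from unmatched vertices by even alternating paths, let $T$ be the boundary of $D$, and so on) is the standard route and can be made to work with care. But as a self-contained proof of Theorem~\ref{gallaiedmonds} the proposal does not stand, because its first line presupposes exactly the nontrivial existence statement that the theorem is needed to supply.
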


\begin{proof}[Proof of Theorem \ref{strongmaximailityingraphs}]
Let $T$ and $\cf$ be as in Theorem \ref{gallaiedmonds}. Let $\cg$ consist of
those elements of $\cf$ belonging to the range of $F$, and let $\ch
= \cf \setminus \cg$. For every $t$ in $T$ let $J_t$ be a perfect
matching of the graph $F(t) - v(t)$. For every $F \in \ch$ choose
an almost perfect matching $J_F$. Let $N$ be a perfect matching in
the graph $G-T-\bigcup_{F \in \cf} V(F)$. We claim that the
matching $M$ defined as $\{t v_t \mid t \in T\} \cup \bigcup_{t \in T} J_t \cup \bigcup_{F \in \ch}J_F \cup N$ is strongly maximal. Suppose not; then, by \Lr{strongly_max_iff_no_finiteap}, there exists a finite improving
$M$-alternating path $Q$. By the construction of $M$ the endpoints of $Q$ are unmatched
vertices $v_1,v_2$ of some $F_1, F_2 \in \ch$ respectively where $F_1 \neq F_2$. Now go along $Q$,
starting at $v_1$. Since $F_1$ is a component of $G-T$, the path
$Q$ can leave $F_1$ only through $T$. Let $t_1$ be the first
vertex of $Q$ in $T$. Since the edge of $Q$ leading
to $t_1$ does not belong to $M$, the edge $e$ of $Q$ leaving $t_1$ does
belong to $M$; let $e=:t_1 u_1$, where $u_1 \in F(t_1)$. But when
$Q$ leaves $F(t_1)$, it is again through an edge not belonging to $M$ that contains a vertex $t_2$ of $T$. Thus, again, the edge of $Q$ leaving $t_2$
belongs to $M$, and continuing this way we see that $Q$ cannot leave $T \cup \bigcup \cg$, contradicting
the fact that $v_2 \in F_2 \in \ch$.
\end{proof}

An even stronger notion than strong maximality of a matching in a
graph is that of {\em having (inclusion-wise) maximal support}. Similarly to the proof of Lemma~\ref{strongly_max_iff_no_finiteap} it is possible to show:

\begin{lemma}\label{maxsupport}
A matching $M$ has maximal support if and only if there does not
exist any (finitely or infinitely) improving $M$-alternating path.
\end{lemma}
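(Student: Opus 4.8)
The plan is to mimic the proof of Lemma~\ref{strongly_max_iff_no_finiteap}, but now allowing alternating paths that are infinite or that end inside $supp(M)$ to play the role of the improving configurations. First I would prove the easy direction: if $P$ is a finitely improving $M$-alternating path, then $M \triangle E(P)$ is a matching whose support strictly contains $supp(M)$ (it picks up the two endpoints of $P$); and if $P$ is an infinitely improving $M$-alternating path, then $M \triangle E(P)$ is again a matching, its support gaining the one endpoint of $P$ that lay outside $supp(M)$ while losing nothing, since every vertex internal to $P$ stays matched and the infinite end has no free vertex to lose. In both cases $M$ does not have maximal support. Note that $M$-indifferent paths are deliberately excluded here: flipping along a two-way-infinite path, or along a finite path with exactly one end in $supp(M)$, exchanges one free vertex for another and does not enlarge the support — which is precisely why only the two ``improving'' types appear in the statement.

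For the converse, suppose $M$ does not have maximal support, so there is a matching $N$ with $supp(M) \subsetneq supp(N)$; fix a vertex $v \in supp(N) \setminus supp(M)$. As in Lemma~\ref{strongly_max_iff_no_finiteap}, the symmetric difference $M \triangle N$ decomposes into a family $\cf$ of $M$--$N$-alternating paths and cycles. The vertex $v$ is an endpoint of some component $Q \in \cf$ (it is incident with an edge of $N \setminus M$ but with no edge of $M$, so it is an endpoint of a path component, not an interior vertex and not on a cycle). Now I would analyze $Q$ starting from $v$: the first edge of $Q$ lies in $N \setminus M$. If $Q$ is infinite with $v$ its only endpoint, then $Q$ is an infinitely improving $M$-alternating path and we are done. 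If $Q$ is finite, let $v'$ be its other endpoint; the key point is that $v' \notin supp(M)$. Indeed, if $v'$ were in $supp(M)$, then the $M$-edge at $v'$ would have to lie on $Q$ (otherwise $Q$ could be extended), forcing the last edge of $Q$ to be in $M$; but then $v'$ is matched by $M$ and unmatched by $N$ along $Q$, i.e.\ $v' \in supp(M) \setminus supp(N)$, and $Q$ is $M$-indifferent rather than improving. This is consistent with $v \in supp(N)\setminus supp(M)$, so it does not immediately contradict anything — hence this is the step that needs care.

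To close that gap I would argue by a counting/parity argument on the whole family $\cf$, exactly paralleling the proof of Lemma~\ref{strongly_max_iff_no_finiteap}: since $supp(M) \subsetneq supp(N)$, the vertices of $supp(N)\setminus supp(M)$ are strictly more numerous than those of $supp(M)\setminus supp(N)$ — more precisely, every vertex of $supp(M)\setminus supp(N)$ is the $M$-end of a finite $M$-indifferent path of $\cf$, every vertex of $supp(N)\setminus supp(M)$ is the $N$-end of a finite path of $\cf$, and a finite path of $\cf$ has at most one end of each kind, while the $N$-ends outnumber the $M$-ends. Hence there is a finite component $Q \in \cf$ with \emph{both} endpoints outside $supp(M)$ (equivalently, both in $supp(N)$), i.e.\ a finitely improving $M$-alternating path; or else some $N$-free-of-$M$ vertex lies on an infinite one-ended component, giving an infinitely improving $M$-alternating path. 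In either case we have produced an improving $M$-alternating path, completing the proof. The main obstacle, as flagged, is making the ``there is a component that is genuinely improving, not merely indifferent'' step rigorous when the graph is infinite and $\cf$ itself may be infinite; the right way is to do the bookkeeping componentwise, noting that on each finite indifferent path the $M$-free and $N$-free endpoints are in bijection, so the global strict inequality $|supp(N)| > |supp(M)|$ (or its set-theoretic analogue when these are infinite, via the vertex $v$) must be absorbed by an improving component.
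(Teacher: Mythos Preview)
The paper does not actually prove this lemma; it only remarks that the proof is similar to that of Lemma~\ref{strongly_max_iff_no_finiteap}. Your approach follows exactly that template, and the easy direction is fine.

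For the converse, however, you create a difficulty that is not there and then work hard to get around it. You correctly start from a matching $N$ with $supp(M)\subsetneq supp(N)$, pick $v\in supp(N)\setminus supp(M)$, and look at the component $Q$ of $M\triangle N$ through $v$. In the finite case you correctly observe that if the other endpoint $v'$ lay in $supp(M)$ then $v'\in supp(M)\setminus supp(N)$ --- and then you write ``this does not immediately contradict anything''. But it does: the hypothesis is $supp(M)\subseteq supp(N)$, so $supp(M)\setminus supp(N)=\emptyset$. Hence $v'\notin supp(M)$ is forced, $Q$ is finitely $M$-improving, and the proof is finished right there. No indifferent components can occur at all.

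Your subsequent ``counting'' paragraph is therefore unnecessary, and as written it is also shaky: phrases like ``the $N$-ends outnumber the $M$-ends'' and ``$|supp(N)|>|supp(M)|$'' are cardinality statements that would not survive in the genuinely infinite setting you flag as the ``main obstacle''. The only reason the bookkeeping goes through is that the set of $M$-ends is empty --- which is exactly the observation you missed one paragraph earlier. Once you use $supp(M)\setminus supp(N)=\emptyset$ directly, the argument collapses to two lines and matches what the paper intends by ``similarly to the proof of Lemma~\ref{strongly_max_iff_no_finiteap}''.
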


In \cite{steffens} the
following stronger version of Theorem~\ref{strongmaximailityingraphs} was proved for countable graphs:

\begin{theorem}
In every countable graph there exists a matching with maximal support.
\end{theorem}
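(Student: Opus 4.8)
The plan is to build the desired matching by a transfinite/inductive exhaustion argument, using Lemma~\ref{maxsupport} as the target characterization: it suffices to produce a matching $M$ admitting no (finitely or infinitely) improving $M$-alternating path. Enumerate the vertices of the countable graph $G$ as $v_1, v_2, \ldots$ and build an increasing chain of matchings $M_0 \subseteq M_1 \subseteq \cdots$ together with a decreasing sequence of ``frozen'' vertex sets, where at stage $n$ we commit vertex $v_n$ (if it can be matched at all given the current commitments) to a fixed matching edge and never disturb it afterwards. The point is that a finitely improving path can be killed off in finitely many steps (as in Lemma~\ref{strongly_max_iff_no_finiteap}), but an infinitely improving path has a free endpoint $v$ that we must take care never arises: once a vertex is frozen as matched, no alternating path starting at a free vertex can ``use it up'' to reach another free vertex, because the frozen edge blocks the alternation in the wrong parity.

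First I would set up the recursion carefully. Having built $M_n$ with frozen set $S_n \supseteq \{v_1,\dots,v_n\}$, consider $v_{n+1}$. If $v_{n+1}$ is already matched by $M_n$, freeze it: $M_{n+1}=M_n$, $S_{n+1}=S_n\cup\{v_{n+1}\}$. Otherwise ask whether there is an $M_n$-alternating path from $v_{n+1}$ to another unmatched vertex that avoids touching the frozen structure $S_n$ in a forbidden way — if so, augment along (a suitably chosen such) path to get $M_{n+1}$, which now matches $v_{n+1}$, and freeze $v_{n+1}$; if not, leave $v_{n+1}$ unmatched forever and still freeze it as ``decided.'' Crucially the augmentation must not unmatch any previously frozen vertex; this is where countability helps, since at each stage only finitely/countably many constraints are in play. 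Then set $M = \bigcup_n M_n$; since each edge, once placed and frozen, is never removed, and each vertex is decided at some finite stage, $M$ is a well-defined matching and $supp(M)=\bigcup_n supp(M_n)$.

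Next I would verify that $M$ has maximal support via Lemma~\ref{maxsupport}, i.e.\ rule out both kinds of improving alternating paths. Suppose $Q$ is an improving $M$-alternating path with free endpoint $v=v_k$. If $Q$ is finite with both endpoints free, then at the later of the two stages where those endpoints were decided, the relevant finite alternating connection already existed (using only edges and vertices decided by then), so that vertex would have been matched — contradiction. If $Q$ is infinitely improving, truncate it to longer and longer finite $M$-alternating initial segments; each such segment, for large enough stage, is an $M_n$-alternating path from $v_k$, and one shows that the recursion would have matched $v_k$ using the portion already revealed — again a contradiction. The heart of the matter is proving that the ``forbidden interaction with the frozen set'' never blocks a genuinely available augmentation: i.e.\ if in the limit graph there is an improving path from a free vertex, then at some finite stage there is an improving path from that vertex that respects all the freezes made so far.

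The main obstacle, and the step I expect to need the most care, is exactly this compatibility between freezing and augmentation: naively augmenting along an alternating path can unmatch a frozen vertex, and one must argue that whenever an improving path from $v$ exists in $G$ relative to $M_n$, one can reroute it (shorten it, or pick a different one) so that it is internally disjoint from $supp(M_n\cap S_n)$ in the direction that would cause harm, so that the augmentation only ever adds to or locally modifies the undecided part. Controlling this rerouting — essentially an alternating-path surgery argument, plausibly invoking a Gallai–Edmonds style analysis of the undecided subgraph at stage $n$ as in Theorem~\ref{gallaiedmonds} — is where the countability hypothesis is genuinely used and is the crux of the proof.
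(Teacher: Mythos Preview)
The paper does not prove this theorem; it is cited from \cite{steffens} without argument, so there is no in-paper proof to compare your proposal against.

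On the proposal itself: the overall shape is standard, but the step you flag as the crux is not merely unfinished---without it the scheme fails outright. Consider the four-vertex path with edges $v_4v_2,\ v_2v_1,\ v_1v_3$, enumerated $v_1,v_2,v_3,v_4$. At stage $1$ you match $v_1$; if the ``suitably chosen'' augmenting edge is $v_1v_2$ (nothing you wrote rules this out), it is frozen. At stage $3$ the only augmenting path from $v_3$ is $v_3v_1v_2v_4$, which disturbs the frozen edge, and there is no reroute since $v_3$ has degree one. Your rule then declares $v_3$, and later $v_4$, unmatched, leaving $supp(M)=\{v_1,v_2\}$---not maximal, since $\{v_1v_3,\,v_2v_4\}$ is a perfect matching. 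So the hoped-for rerouting lemma (``whenever an improving path exists, one avoiding all frozen edges exists'') is simply false; the actual content of the proof has to be a rule for \emph{which} edge to freeze at each stage so that this situation never arises, and no such rule is proposed. Your treatment of the infinitely improving case is also off: a finite initial segment of such a path ends at a \emph{matched} vertex, so it is not augmenting, and in any case it is alternating for the final $M$, not for $M_{k-1}$ at the moment $v_k$ was being decided. There are minor slips as well: augmenting removes edges, so the $M_n$ cannot literally satisfy $M_0\subseteq M_1\subseteq\cdots$ (only the frozen edges are nested), and the frozen vertex sets are increasing, not decreasing.
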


In our proof of \Tr{sminpm} we are going to need the following corollary of \Tr{strongmaximailityingraphs}:

\begin{lemma}\label{stronglymaxcontainingsupport}
For any graph $G$, and every matching $M$ in $G$ there exists a strongly
maximal matching $N$ such that $supp(N) \supseteq supp(M)$.

\end{lemma}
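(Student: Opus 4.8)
The plan is to build $N$ from $M$ by a two-phase argument: first enlarge the support as much as possible while keeping a matching, then repair the result into a strongly maximal matching without shrinking the support. The key point is that we may modify $M$ only along $M$-alternating paths and cycles that do not decrease $supp$.

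\smallskip

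First I would consider the graph $G'$ obtained from $G$ by deleting the set of vertices $supp(M)$ is \emph{not} the right move, since an improving path for a bigger matching might pass through $supp(M)$. Instead, the cleaner approach is: among all matchings $N_0$ of $G$ with $supp(N_0) \supseteq supp(M)$, the set of possible supports, ordered by inclusion, has a maximal element by Zorn's Lemma. Indeed, given a chain of such matchings $\{N_i\}$, the union of their supports $U = \bigcup supp(N_i)$ is a vertex set, and $G[U]$ has a matching with support $U$: one can see this by noting that every vertex of $U$ lies in the support of some $N_i$, and a standard compactness/alternating-path argument along $M \triangle N_i$ shows the supports form an increasing family realizable by a single matching in the limit (here one uses that each step from $N_i$ to $N_{i+1}$ is a symmetric difference along alternating paths/cycles, and these can be amalgamated). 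Thus fix a matching $N_1$ with $supp(N_1) \supseteq supp(M)$ and $supp(N_1)$ inclusion-maximal among such.

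\smallskip

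Second, observe that by the maximality of $supp(N_1)$ there is no improving $N_1$-alternating path \emph{whose addition would enlarge the support beyond $supp(N_1)$}; more to the point, I claim we can now work inside $G[supp(N_1)]$. Let $H := G[supp(N_1)]$. Then $N_1$ is a perfect matching of $H$. Apply \Tr{strongmaximailityingraphs} to $H$ to obtain a strongly maximal matching $N$ of $H$. Since $H$ has the perfect matching $N_1$, a strongly maximal matching $N$ of $H$ must also be perfect (any vertex of $H$ left unmatched by $N$ together with the vertices matched by $N_1$ would yield, via $N \triangle N_1$, a finitely improving $N$-alternating path in $H$, contradicting \Lr{strongly_max_iff_no_finiteap}). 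Hence $supp(N) = V(H) = supp(N_1) \supseteq supp(M)$. It remains to check $N$ is strongly maximal \emph{in $G$}, not just in $H$. Suppose $Q$ is a finitely improving $N$-alternating path in $G$; its endpoints are outside $supp(N) = supp(N_1)$, so adding $Q$'s $N$-augmentation to $N$ produces a matching of $G$ whose support strictly contains $supp(N_1)$, contradicting the maximality of $supp(N_1)$. By \Lr{strongly_max_iff_no_finiteap}, $N$ is strongly maximal in $G$, and $supp(N) \supseteq supp(M)$ as required.

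\smallskip

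The main obstacle is the Zorn's Lemma step: verifying that a chain of matchings with increasing supports has an upper bound, i.e.\ that the union of the supports is itself the support of a single matching of $G$. The subtlety is that the matchings in the chain need not be nested as edge sets, only their supports need be nested, so one must argue that the symmetric differences behave well along the chain — essentially that at a limit ordinal the union $\bigcup N_i$ need not be a matching but can be \emph{rechosen} to give a matching with the right support. I would handle this by taking, at each limit stage, the matching $N_1$ realizing $\bigcup supp(N_i)$ as a matching of $G[\bigcup supp(N_i)]$: since every vertex of this set is eventually in the support, one shows $G[\bigcup supp(N_i)]$ has no vertex that is ``unmatchable'' in it by transfinite accumulation of the alternating structure, so it carries a perfect matching; this is where a little care with infinite alternating paths is needed, but it parallels the countable arguments in \cite{steffens}.
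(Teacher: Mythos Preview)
Your argument has two genuine gaps.

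The more decisive one is in your second phase. You claim that if $H$ has a perfect matching $N_1$ and $N$ is a strongly maximal matching of $H$, then $N$ must be perfect, because an unmatched vertex would yield via $N\triangle N_1$ a \emph{finitely} improving $N$-alternating path. This inference is false: the component of $N\triangle N_1$ through an $N$-unmatched vertex may well be a one-way infinite path, which is only \emph{infinitely} $N$-improving and is not ruled out by \Lr{strongly_max_iff_no_finiteap}. A concrete counterexample: on the two-way infinite path $\dotsc v_{-1}v_0v_1\dotsc$ (which has a perfect matching), the matching $\{v_iv_{i+1}\mid i\neq 0 \text{ odd}\}$ leaves $v_0$ unmatched but admits no finitely improving alternating path, hence is strongly maximal. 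So your $N$ need not be perfect in $H$, and then neither $supp(N)\supseteq supp(M)$ nor the final step (which uses $supp(N)=supp(N_1)$) goes through. You would need a matching with \emph{maximal support} in $H$, but the result you cite from \cite{steffens} is only stated for countable graphs.

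The first phase is also a gap you yourself flag but do not close: the Zorn step requires that for a chain of matchings $N_i$ with increasing supports, the union $U=\bigcup supp(N_i)$ is realised as $supp(N')$ for some matching $N'$. Your sketch (``standard compactness/alternating-path argument \ldots parallels the countable arguments'') is not a proof, and for uncountable $G$ there is no compactness principle available; in particular nothing prevents $G[U]$ from failing to have a perfect matching even though every $G[supp(N_i)]$ does.

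For comparison, the paper's proof avoids both obstacles by starting from an arbitrary strongly maximal matching $K$ (which exists by \Tr{strongmaximailityingraphs}) and switching $K$ along exactly those components of $K\triangle M$ that are finite $K$-indifferent or infinitely $K$-improving; this directly produces an $N$ with $supp(N)\supseteq supp(M)$, and the delicate part is then a local argument showing that a hypothetical finitely $N$-improving path can be converted into one for $K$.
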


\begin{proof}
Let $K$ be a strongly maximal matching of $G$, which exists by \Tr{strongmaximailityingraphs}. Then, the symmetric difference $K \triangle M$ spans a set $\cg$ of disjoint $M$--$K$-alternating paths and cycles. Let $\cg'\subseteq \cg$ be the set of those elements of $\cg$ that are either finite $K$-indifferent paths or infinitely $K$-improving paths. We can derive a new matching $N$ from $K$ by switching between $K$ and $M$ along all paths in $\cg'$; formally, let $N:= K \triangle \bigcup_{P \in \cg'} E(P)$. Clearly, since there are no finitely $K$-improving paths by Lemma~\ref{strongly_max_iff_no_finiteap}, $supp(N) \supseteq supp(M)$. We claim
that $N$ is strongly maximal.

Suppose not. Then, by Lemma~\ref{strongly_max_iff_no_finiteap}, there exists a finitely
improving $N$-alternating path $Q$. We shall use $Q$ in order to construct a matching $L$ such that $|L \setminus K| > |K
\setminus L|$ contradicting the strong maximality of $K$. As an intermediate step, we first construct a further matching $K'$ by removing finitely many edges from $K$ and adding the same amount of new edges. To define $K'$, we start with $K$ and perform the following operations:

\begin{enumerate}
\item \label{finKindifferent}
  For every finite element $P$ of $\cg'$ incident with $Q$, replace $K \cap E(P)$ by $M \cap E(P)$ (the resulting matching thus coincides with $N$ on $E(P)$; note that $P$ has even length as it is a finite $K$-indifferent path).
\item \label{infKimproving}
  For every infinite element $R$ of $\cg'$ (i.e.\ for every infinitely $K$-improving path in $\cg$) incident with $Q$, let $k=k(R)$ be the last edge on $R$ that lies in $K$ and is incident with $Q$. Replace all edges of
$R$ that lie in $K$ and precede $k$ on $R$, including $k$ itself, by the
edges of $M$ lying on $R$ and preceding $k$.
\end{enumerate}

Let $K'$ be the resulting matching. By construction, $K'$ satisfies
$|K'\setminus K| = |K\setminus K'|<\infty$. Moreover, $K' \cap E(Q)=N \cap E(Q)$ holds by construction and thus $Q$ is a
$K'$-alternating path as it is an $N$-alternating path, and in fact it is a finitely $K'$-improving
one: To prove this, we have to show that the endvertices of $Q$ do not lie in $supp(K')$. As $Q$ is finitely $N$-improving, its endvertices do not lie in $supp(N)$. If an endvertex $v$ of $Q$ does not lie in $supp(K)$, it clearly also does not lie in $supp(K')$ (as $supp(K')\subset supp(K)\cup supp(N)$). On the other hand, if $v$ lies in $supp(K)$ and hence in $supp(K)\setminus supp(N)$, then by the construction of $N$ it is the endvertex of a finite $K$-indifferent path in $\cg'$. This path was considered in~\ref{finKindifferent} and hence $v\notin supp(K')$. Therefore the endvertices of $Q$ do not lie in $supp(K')$ and $Q$ is a finitely $K'$-improving path.

Letting $L= K' \triangle E(Q)$ we thus have $|L \setminus
K'|>|K' \setminus L|$, from which it easily follows that
$|L \setminus K|>|K \setminus L|$, contradicting the fact that $K$ is strongly maximal.
\end{proof}

\section{Strongly maximal weighted matchings}

In this section we prove \Tr{sminpm} and \Tr{main}. Before we do so, let us argue that \Tr{sminpm} is in a way best possible. First, we claim that the requirement that $G$ be a complete graph is essential in it. Indeed, if $G$ is any graph that has an almost perfect matching, then it does not necessarily have an almost perfect strongly $w$-minimal matching.
To see this, consider the graph consisting of a set of paths $P_1,P_2, \ldots$ that have precisely their first vertex $w$ in common, such that each $P_i$ comprises $2i$ edges weighted alternatingly with zeros and ones (starting at $w$ with a zero-weight edge). Any almost perfect matching of this graph that matches $w$ by an edge $e$ can be improved by matching $w$ by the first edge of a $P_j$ with a higher index than the $P_i$ containing $e$, and the almost perfect matching that does not match $w$ can be improved by any almost perfect matching. This example can easily be modified to obtain a graph that has a perfect matching but no perfect strongly $w$-minimal one: add a copy $K$ of $K_{\aleph_0}$ to the graph, identifying the final vertex of each $P_i$ with a distinct vertex of $K$ and let all edges of $K$ have weight $0$.

Next, let us see why we cannot improve \Tr{sminpm} by always demanding a strongly $w$-minimal perfect matching rather than an almost perfect one. Let $G$ be a complete graph of any infinite cardinality, pick a vertex $v\in V(G)$, and let $M$ be a perfect matching of $G - v$. Now let $w(e) = 0$ if $e \in M$ and $w(e)=1$ otherwise. Suppose that $N$ is a strongly $w$-minimal perfect matching of $G$, let $e_1=vw$ be the edge of $N$ matching $v$ and let $e_2=w'y$ be the edge of $N$ matching the vertex $w'$ that lies with $w$ in an edge of $M$. But then, $(N \backslash \{e_1,e_2\}) \cup \{vy,ww'\}$ improves $N$, contradicting the fact that it is strongly $w$-minimal. Thus, $G$ has no strongly $w$-minimal perfect matching.

It is easy to construct counterexamples to \Tr{sminpm} and \Tr{main} if $w$ assumes infinitely many values. At the end of this section we will construct a counterexample in the case that $w$ assumes finitely many values that are not all rational.


\begin{proof}[Proof of \Tr{sminpm}]
Without loss of generality we may assume that all weights are positive, since otherwise we can add a large positive constant to all of them. Since $w$ assumes only finitely many values, we may further assume that all weights are integers. All $M$-alternating paths (for some given matching $M$) considered in this section start with an edge that does not lie in $M$.

Our proof is an adaptation of Edmonds' algorithm for finite graphs (\cite{edmonds}, see also \cite{schrijverBook}). This is a ``primal-dual'' optimisation algorithm, where the primal problem is minimising the total weight of a perfect matching and the dual is maximising the sum of a set of ``potentials'' $\pi_i(U)$ assigned to some vertex sets $U$. In the infinite case though, comparing the total weight of a perfect matching with the sum of the potentials does not help, as both values will in general be infinite. However, in order to show that a matching cannot be locally improved, i.e.\ it is strongly minimal, we will only have to compare finitely many edge weights to the sum of finitely many potentials.

The basic idea of Edmonds' algorithm is the following: In the unweighted case, the problem of constructing a maximal matching reduces to the problem of finding a (finitely) improving $M$-alternating path for a given matching $M$. An improving $M$-alternating path, however, is not easy to construct. On the other hand, $M$-alternating walks are easy to construct, but as they may contain cycles they cannot be used to improve $M$ by taking the symmetric difference. However, if an $M$-alternating walk starting in an unmatched vertex runs into a cycle, then this cycle has to be odd and is thus uniformly almost matchable. In Edmonds' algorithm, such odd cycles are contracted (`shrunk') whenever they occur. At the end of the process the cycles are recursively decontracted using the fact that they are uniformly almost matchable to extend the maximal matching of the graph with contracted vertices to a maximal matching of the original graph.

In the weighted case, one wants to find a minimum-weight perfect matching under the assumption that the graph has a perfect matching. The algorithm starts with considering only the edges of smallest weight. Like in the non-weighted case, the algorithm contracts odd cycles that can occur in alternating walks and it improves the current matching by finding improving alternating paths. When all contractions of odd cycles and improvements of the current matching are done, the algorithm considers some of the edges that had not been considered so far. Whether an edge will be considered or not at a given step depends on the potentials $\pi_i$ mentioned earlier. Unlike the non-weighted case, some sets have to be decontracted during the construction, and again whether a set will be decontracted or not depends on the potentials $\pi_i$.

Our adaptation of Edmonds' algorithm has two major differences: Firstly, we will not only contract odd cycles but some larger sets of vertices (possibly infinite). These sets of vertices will be uniformly almost matchable, which will become important when decontracting. Secondly, we will not improve our matchings by finding improving alternating paths as this might take infinitely many steps. Instead, we will in each step extend our current matching to a strongly maximal matching using Lemma~\ref{stronglymaxcontainingsupport}, then perform contractions, and finally add more edges before we proceed to the next step.

Our construction follows a recursive procedure, in each step $i$ of which we will be manipulating several ingredients:

\begin{itemize}
\item a collection $\OO_i$ whose elements are vertex sets, sets of vertex sets, sets of sets of vertex sets and so on, and an assignment of potentials $\pi_i:\OO_i \to \R$. 
\item an auxiliary graph $G_i$ on $V=V(G)$.
 
    \item an auxiliary graph $G'_i$, having as vertices the maximal sets in $\OO_i$.
    \item an auxiliary graph $H_i(U)$ for each set $U \in \OO_i$, having $U$ as its vertex set. 
\item a matching $M_i$ in $G'_i$. 
\end{itemize}
The elements of $\OO_i$ represent the vertex sets contracted so far. For practical reasons we do not want all elements of $\OO_i$ to be vertex sets but also allow sets of vertex sets, sets of sets of vertex sets, and so on.
The graph $G_i$ will consist of all edges considered in step $i$, while the graph $G'_i$ is obtained from $G_i$ by performing the contractions.
The matchings $M_i$ are to be `unfolded' at the end of the process, to form the desired strongly minimal matching in $G$.

For a set $U$ in $\OO_i$ we denote by $\uuu U$ the set of vertices nested in $U$; formally, a vertex $x\in V(G)$ lies in $\uuu U$ if and only if there is a finite sequence of sets $U_1 \in U_2 \in \dotsb \in U_k$ where $U_k=U$ and $x \in U_1$. The collection $\OO_i$ will be \defi{laminar}, that is, for any $U,W \in \OO_i$ either $\uuu U\cap \uuu W = \emptyset$ or $\uuu U\subseteq \uuu W$ or $\uuu W \subseteq \uuu U$ will hold. Moreover, $\OO_i$ will contain $\{v\}$ for every $v \in V$.

The auxiliary graph  $G_i$ is  defined at each step $i$ by $G_i=(V,E_i)$, where $E_i$ is the set of edges of $G$ for which
\begin{equation} \label{eq:sat}
\sum_{\substack{U\in \OO_i\\ e\in \delta(U)}} \pi_i(U)= w(e)
\end{equation}
holds, where $\delta(U)$ is the set of edges that have precisely one endvertex in $\uuu U$.

Let $\OO_i^{\MAX}$ be the set of maximal elements of $\OO_i$ with respect to containment, and note that $\{\uuu U \mid U\in \OO_i^{\MAX}\}$ is a partition of $V(G)$ as $\OO_i$ is laminar and every vertex $v$ is contained in some $\uuu U$, eg.\ in $\uuu \{v\} = \{v\}$. For $U \in \OO_i$ we now define an auxiliary multigraph $\giv{U}$. The vertices of $\giv{U}$ are the elements of $U$, and for every edge $e=xw$ of $G_i$ such that $x\in \uuu X$ and $w \in \uuu W$ where $X,W$ are distinct elements of $U$ we put an $X$-$W$~edge $e'$ in $\giv{U}$. Throughout the paper we shall not formally distinguish the edges $e$ and $e'$. With this abuse of notation, the auxiliary graph $G'_i$ is defined by $G'_i:=\giv{\OO_i^{\MAX}}$, where $\giv{\OO_i^{\MAX}}$ is defined analogously to $\giv{U}$.

At each step $i$ the following conditions will be satisfied:

\begin{gather}
\label{eq:nonnegative}
\pi_i(U) \geq 0 \text{ for every $U\in
\OO_i$ with $\left\vert\uuu U\right\vert\geq 3$,}\\
\label{eq:undersaturated}
\sum_{\substack{U\in \OO_i\\ e\in \delta(U)}} \pi_i(U)
\leq w(e) \text{ for every $e \in E$,}\\
\label{cond:uam}
\giv{U}\text{ is uniformly almost matchable for every $U\in \OO_i$}.
\end{gather}
The procedure stops in case that $M_i$ is perfect or almost perfect. Then, using condition~(\ref{cond:uam}) we will recursively decontract the sets in $\OO_i$ so as to extend $M_i$ to a perfect or almost perfect matching of $G_i$ (and hence of $G$), and use conditions~\eqref{eq:nonnegative} and \eqref{eq:undersaturated} to prove that it is strongly $w$-minimal in $G$.

To start the inductive definition, we set $\OO_0=\{\{v\} \mid v \in V(G)\}$ and $\pi=\pi_0(\{v\})=0$ for every $v$. By its definition,   $G_0$ contains all $0$-weight edges in $G$; the graph $G'_0$ 
is essentially the same, with the subtle difference that its vertices are singleton sets, and not vertices; and 
the graphs $\giv{U}$ are all trivial, namely they have one vertex each, and no edges.
Finally let $M_0$ be a strongly maximal matching in $G'_0$, the existence of 
which is guaranteed by \Tr{strongmaximailityingraphs}.

Now for $i=0,1,\dotsc$ do the following.

If $M_i$ is perfect or almost perfect then stop the iteration (at the end of this proof we will use $M_i$ to construct the required matching of $G$). So, assume that the set $X'_i$ of vertices unmatched by $M_i$ contains more than one vertex. 

In order to enlarge $M_i$ we now would like to add new edges, i.e.\ to change the $\pi$-values so as to let new edges satisfy~\eqref{eq:sat}. As we want to be able to match vertices in $X'_i$, we could try and increase the $\pi$-values on $X'_i$. But then any edge of $G'_i$ at a vertex in $X'_i$ will fail to satisfy~(\ref{eq:undersaturated}) as it already satisfied~\eqref{eq:sat} before and the $\pi$-value of one of its endpoints has been increased while the other remained the same. Hence we have to decrease the $\pi$-values of all neighbours of $X'_i$ in $G'_i$. Now consider an edge in $M_i$ incident with such a neighbour of $X'_i$. As it satisfied~\eqref{eq:sat} before and the $\pi$-value of at least one of its endvertices has been decreased while the other has not been increased, it will not satisfy~\eqref{eq:sat} in the next step. In order to prevent this loss of matching edges, we have to increase the $\pi$-value of every vertex that is matched in $M_i$ to a neighbour of $X'_i$. Continuing this way, we obtain that we want to increase the $\pi$-value on the set $T'_i$ of all vertices of $G'_i$ that are reachable from $X'_i$ by an even $M_i$-alternating path (possibly trivial), while we want to decrease it on the set $S'_i$ of vertices reachable from $X'_i$ by an odd $M_i$-alternating path.

We could proceed like this if $S'_i$ and $T'_i$ were disjoint, but in general this will not be the case. For instance, the vertices on the odd cycles contracted in Edmonds' algorithm have the property that they are reachable from the set of unmatched vertices by alternating paths both of even and odd lengths. To amend this, we will contract each component of $G'_i-(S'_i\setminus T'_i)$ that contains a vertex of $T'_i$, so as to obtain a new graph $G^*_i$. In this graph, we will be able to perform the desired changes of $\pi$-values.

Formally, let
\begin{equation*}
  \cu_i:= \{ V(C) \mid \text{$C$ is a component of $G'_i-(S'_i\setminus T'_i)$ that contains a vertex in $T'_i$} \},
\end{equation*}
put $\cv_i:=\OO_i\cup\cu_i$, and let $G^*_i := \giv{\cv_i^{\MAX}}$ (where $\cv_i^{\MAX}$ is defined analogously to $\OO_i^{\MAX}$). 
Note that $\cv_i$ is laminar since $\OO_i$ is and $\cv_i\setminus\OO_i = \cu_i$ consists of disjoint subsets of $\OO_i^{\MAX}$.

Let $X_i$ be the set of vertices of $G^*_i$ that are not matched by $M^*_i:=M_i\cap E(G^*_i)$ (which, as we shall see soon, will be a matching in $G^*_i$), let $S_i$ be the set of vertices $s$ of $G^*_i$ for which there is an $M^*_i$-alternating $X_i-s$ path of odd length in $G^*_i$, and let $T_i$ be the set of vertices $t$ of $G^*_i$ for which there is a (possibly trivial) $M^*_i$-alternating $X_i-t$ path of even length.
We claim that: 

\begin{proposition} \label{pro:newOO}
The following assertions are true:
\begin{enumerate}
\item \label{enum:uamatchable}
  $\giv{U}=G'_i[U]$ is uniformly almost matchable for every $U\in\cu_i$;
\item \label{enum:edgesfromU}
  $|M_i \cap \delta(U)|=0$ if $U\cap X'_i\not= \emptyset$ and $|M_i \cap \delta(U)|=1$ otherwise for every $U\in\cu_i$, and
\item \label{enum:newST}
  $S_i=S'_i\setminus T'_i$ and $T_i= \cu_i$.
\end{enumerate}
\end{proposition}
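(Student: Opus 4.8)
The plan is to verify the three assertions in order, since the later ones build on the earlier ones. For \eqref{enum:uamatchable}, the key point is that each $U\in\cu_i$ is by definition the vertex set of a component $C$ of $G'_i-(S'_i\setminus T'_i)$ containing a vertex of $T'_i$, so first I would observe that $\giv{U}$ is indeed the induced subgraph $G'_i[U]$ (no edges of $G'_i$ leave a component). To show uniform almost matchability I would pick an arbitrary $x\in U$ and build a perfect matching of $\giv{U}-x$: take the root $t\in T'_i$ of $C$ reached from $X'_i$ by an even $M_i$-alternating path $P$, and note that $M_i$ restricted to $G'_i[U]$ already matches $U$ except possibly for $t$ (every vertex of $C$ other than $t$ is reached by an alternating path through $C$ whose last edge is in $M_i$, because the odd-distance vertices all lie in $S'_i$ and even-distance vertices, being in $T'_i$, are matched within $C$ — here I use that $S'_i\setminus T'_i$ was removed, so the alternating reachability cannot escape $U$). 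Then to free up a prescribed $x$ instead of $t$, I would take an even $M_i$-alternating $t$–$x$ path inside $G'_i[U]$ and switch along it; producing such a path is exactly where the component structure and the definition of $S_i',T_i'$ get used, and I expect this to be the most delicate bookkeeping.

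For \eqref{enum:edgesfromU}: if $U$ meets $X'_i$, then since $U$ is a single component of $G'_i-(S'_i\setminus T'_i)$ and all of $U$ lies in $T'_i\cup (S'_i\setminus T'_i)^c$, I would argue that the alternating-forest structure forces every matching edge of $M_i$ meeting $U$ to have both endpoints in $U$ (a matching edge leaving $U$ would land in $S'_i\setminus T'_i$, but then its $U$-endpoint would be at odd distance, hence in $S'_i$, and being in a component with a vertex of $T'_i$ it would also be in $T'_i$, so it would not have been removed — contradiction with $U$ being a full component). Hence $M_i\cap\delta(U)=\emptyset$. If $U$ misses $X'_i$, then $t\in T'_i$ is matched by $M_i$, and its matching edge must leave $U$ (otherwise $t$ would be matched inside, and then by the parity argument $U$ would contain an unmatched vertex of $X'_i$ after all, or the even path $P$ to $t$ could be extended, contradicting $t\in T'_i$ being the "entry" vertex); all other vertices of $U$ are matched within $U$ as in the previous paragraph, so $|M_i\cap\delta(U)|=1$. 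In particular this shows $M^*_i=M_i\cap E(G^*_i)$ is a matching in $G^*_i$, since no two contracted sets can be joined by an $M_i$-edge through a common endpoint.

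For \eqref{enum:newST}: the inclusion $T_i\subseteq\cu_i$ and $\cu_i\subseteq T_i$ I would get by translating even $M_i$-alternating paths in $G'_i$ into even $M^*_i$-alternating paths in $G^*_i$ and back — contracting a $\cu_i$-set preserves the parity of an alternating path passing through it because, by \eqref{enum:edgesfromU}, such a set is entered and left along non-matching edges (when it contains an $X'_i$-vertex) or behaves like an odd "super-vertex"; the cleanest formulation is that the contracted sets are precisely the vertices reachable both by even and by odd alternating paths, so after contraction they become the new unmatched-or-even vertices $T_i$, while the genuinely odd-only vertices, namely $S'_i\setminus T'_i$, remain and form $S_i$. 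The main obstacle throughout is the parity/reachability bookkeeping: one must check carefully that removing $S'_i\setminus T'_i$ and then contracting the surviving $T'_i$-components does not create or destroy alternating paths of the wrong parity, and that the "entry vertex" of each contracted set is well defined. Once \eqref{enum:uamatchable} and \eqref{enum:edgesfromU} are in place, \eqref{enum:newST} follows by a direct, if slightly tedious, path-surgery argument, so I would spend the bulk of the write-up on the first two assertions.
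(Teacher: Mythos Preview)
Your overall strategy matches the paper's: identify a distinguished ``entry vertex'' $x_U$ in each $U\in\cu_i$, show that $M_i$ matches all of $U-x_U$ inside $U$, and then switch along an even $M_i$-alternating $x_U$--$v$ path in $G'_i[U]$ to obtain a perfect matching of $U-v$. But you treat the existence and uniqueness of this entry vertex as essentially definitional (``the root $t\in T'_i$ of $C$''), when in fact this is the heart of the argument and does \emph{not} follow from parity bookkeeping alone. A priori several vertices of $U$ could be matched by $M_i$ to vertices of $S'_i\setminus T'_i$, and a priori there need not be even $M_i$-alternating paths from a single vertex of $U$ to every other vertex of $U$ \emph{that stay inside $U$}. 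The paper isolates this as a separate claim (there is an $x_U\in U$ that ``dominates'' every $v\in U$ via $U$) and proves it by a Zorn-style extension argument: take a maximal $U'\subseteq U$ dominated by the candidate $x_U$ via $U'$, pick an edge $uv$ with $v\in U'$, $u\in U\setminus U'$, and show that an even $M_i$-alternating $X'_i$--$u$ path forces the existence of vertices of $U\setminus U'$ dominated via $U'$ together with a tail of that path, contradicting maximality. This argument uses the \emph{strong maximality of $M_i$} (no finitely improving path) in an essential way, first to show $|U\cap X'_i|\le 1$ and then to rule out the bad alternative in the path-splicing. You never invoke strong maximality, and without it the claim is false.

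Concretely, your argument for \ref{enum:edgesfromU} in the case $U\cap X'_i\neq\emptyset$ is circular: you say a matching edge $uv$ with $u\in U$, $v\in S'_i\setminus T'_i$ would force ``its $U$-endpoint to be at odd distance'', but from $v\in S'_i$ and $uv\in M_i$ you only get $u\in T'_i$, which is no contradiction; to get $v\in T'_i$ you would need an \emph{odd} $M_i$-alternating path from $X'_i$ to $u$ \emph{inside $U$}, and that is precisely the domination statement you have not yet proved. Once the dominating-vertex claim is established, \ref{enum:uamatchable} and \ref{enum:edgesfromU} drop out in one line each, and your plan for \ref{enum:newST} (translate alternating paths through the contraction in both directions) is then correct and is what the paper does, via an auxiliary observation that $P\cap G'_i[U]$ is a single subpath of $P$ for any $M_i$-alternating path $P$ from $X'_i$.
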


Part~\ref{enum:uamatchable} is simply~(\ref{cond:uam}) for the sets in $\cu_i$, while~\ref{enum:edgesfromU} ensures that $M^*_i$ is a matching in $G^*_i$ (which is trivial in the case of finite graphs, when only odd cycles are contracted) and~\ref{enum:newST} will enable us to increase the $\pi$-values on $T_i$ and decrease them on $S_i$ so as to obtain new edges, in particular at the vertices in $X_i$.

Before we proceed with the proof of \ProR{pro:newOO} let us show how we use it to construct $\OO_{i+1}$, $\pi_{i+1}$, and $M_{i+1}$, the main ingredients of the next step of our construction. By \ProR{pro:newOO}\ref{enum:newST} and the definition of $\cu_i$ we have $S_i \cap T_i = \emptyset$, and moreover
\begin{txteq}\label{eq:neighboursofT}
  If $U\in T_i$ and $U'$ is a neighbour of $U$ in $G_i|\cv_i^{\MAX}$, then $U'\in S_i$.
\end{txteq}

Hence we can define $\pi_{i+1}: \cv_i \to \R$ as follows (in fact we want $\OO_{i+1}$ to be the domain of $\pi_{i+1}$ but $\OO_{i+1}$ is going to be a subset of $\cv_i$):
\begin{equation*}
  \pi_{i+1}(U):=
  \begin{cases}
    \frac12& \text{if }U\in T_i=\cu_i,\\
    \pi_i(U)-\frac12& \text{if }U\in S_i,\\
    \pi_i(U)& \text{otherwise.}
  \end{cases}
\end{equation*}

For every set $U\in S_i$ with $|\uuu U|>1$ and $\pi_{i+1}(U)=0$, remove $U$ from $\cv_i$ to obtain $\OO_{i+1}$. This will later guarantee that~\eqref{eq:nonnegative} is satisfied. Since we have now defined $\OO_{i+1}$ and $\pi_{i+1}$, the graphs $G_{i+1}$ and $G'_{i+1}$ are also defined. It remains to define $M_{i+1}$.

For this purpose, we first show that for every $U\in \cv_i$ the graph $\givp{U}$ is uniformly almost matchable. We distinguish two cases. If $U\in \OO_i$, then we have $\givp{U}=\giv{U}$ because $\pi_i(W)=\pi_{i+1}(W)$ holds for every $W\in U$ since $S_i$ and $T_i$ by definition only contain maximal elements of $\cv_i$, so any relevant edge of $G$ is present in $G_i$ if and only if it is present in $G_{i+1}$. Thus $\givp{U}$ is uniformly almost matchable since $\giv{U}$ is (by \eqref{cond:uam}). For the second case, when $U \in \cu_i = \cv_i\setminus\OO_i$, then by \ProR{pro:newOO} $\giv{U}$ is uniformly almost matchable, and again this implies that $\givp{U}$ is uniformly almost matchable as well since $\pi_i(W)=\pi_{i+1}(W)$ holds for every $W\in U$.

Thus we have proved our claim. In particular, since $\OO_{i+1} \subseteq \cv_i$, this implies by induction:
\begin{proposition}
Condition \eqref{cond:uam} is satisfied.
\end{proposition}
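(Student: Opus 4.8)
The plan is to prove the proposition by induction on the step index $i$, regarding \eqref{cond:uam} as one of the invariants maintained throughout the recursion. For the base case $i=0$, recall that $\OO_0=\{\{v\}\mid v\in V(G)\}$ and that each $\giv{U}$ with $U=\{v\}$ is the graph on a single vertex and no edges, which is (vacuously) uniformly almost matchable, since deleting its unique vertex leaves the empty graph. Hence \eqref{cond:uam} holds at step $0$.

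For the inductive step I would assume \eqref{cond:uam} at step $i$ and deduce it at step $i+1$. Since $\OO_{i+1}\subseteq\cv_i=\OO_i\cup\cu_i$, it suffices to show that $\givp{U}$ is uniformly almost matchable for every $U\in\cv_i$, and the heart of the matter is the identity $\givp{U}=\giv{U}$ for each such $U$. To obtain it one checks that $\pi_i(W)=\pi_{i+1}(W)$ for every $W\in U$: indeed $\pi_{i+1}$ differs from $\pi_i$ only on the maximal elements of $\cv_i$, namely the sets in $S_i$ and in $T_i=\cu_i$, whereas every $W\in U$ is a member of the set $U\in\cv_i$ and is therefore not maximal in $\cv_i$. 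The same holds for every set nested strictly below $U$, and no such set is discarded in the pruning step that produces $\OO_{i+1}$ from $\cv_i$, since only maximal elements of $\cv_i$ are discarded. Consequently, for every edge $e$ of $G$ with both endvertices in $\uuu U$ the sum $\sum_{e\in\delta(U')}\pi(U')$ has the same value when computed over $\OO_i$ with $\pi_i$ as when computed over $\OO_{i+1}$ with $\pi_{i+1}$; so $e$ lies in $E_i$ if and only if it lies in $E_{i+1}$, and $\giv{U}$ and $\givp{U}$ share both vertex set and edge set.

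It then remains to note that $\giv{U}$ itself is uniformly almost matchable for every $U\in\cv_i$: if $U\in\OO_i$ this is precisely the induction hypothesis \eqref{cond:uam} at step $i$, while if $U\in\cu_i=\cv_i\setminus\OO_i$ then $\giv{U}=G'_i[U]$ is uniformly almost matchable by \ProR{pro:newOO}\ref{enum:uamatchable}. In either case $\givp{U}$ is uniformly almost matchable, which closes the induction and proves the proposition.

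The step I expect to be the main obstacle is the identity $\givp{U}=\giv{U}$, i.e.\ the assertion that modifying the potentials leaves unchanged the contracted auxiliary graphs attached to the sets of $\cv_i$. Making this rigorous rests on the two bookkeeping facts that (a) $\pi$ is unchanged on every set nested strictly below a set of $\cv_i$, and (b) the pruning producing $\OO_{i+1}$ from $\cv_i$ removes no such nested set; both follow from the observation that $S_i$, $T_i$, and the removed sets all consist of maximal elements of $\cv_i$, together with laminarity of $\cv_i$. Given that, the base case and the split into $U\in\OO_i$ versus $U\in\cu_i$ are immediate once \ProR{pro:newOO}\ref{enum:uamatchable} is available.
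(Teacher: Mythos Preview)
Your proposal is correct and follows essentially the same approach as the paper: induct on $i$, split $U\in\cv_i$ into the cases $U\in\OO_i$ (handled by the induction hypothesis) and $U\in\cu_i$ (handled by \ProR{pro:newOO}\ref{enum:uamatchable}), and in both cases reduce to $\giv{U}$ by observing that $\pi_i(W)=\pi_{i+1}(W)$ for every $W\in U$ since $S_i$ and $T_i$ consist only of maximal elements of $\cv_i$. Your explicit treatment of the base case and of the bookkeeping point that the pruning from $\cv_i$ to $\OO_{i+1}$ removes only maximal sets is slightly more detailed than the paper's, but the argument is the same.
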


By~\ref{enum:edgesfromU} of \ProR{pro:newOO}, $M^*_i$ is a matching in $G^*_i$. Using the fact that for every $U\in\cv_i\setminus\OO_{i+1}$ the graph $\givp{U}$ is uniformly almost matchable, we extend $M^*_i$ to a matching $N_i$ in $G'_{i+1}$ with $U\subseteq supp(N_i)$ for every $U\in\cv_i\setminus\OO_{i+1}$; this is possible since by~\ref{enum:edgesfromU} of \ProR{pro:newOO} there is precisely one vertex of $U$ that is incident with an edge in $M_i$, and this edge is also in $M^*_i$. By Lemma~\ref{stronglymaxcontainingsupport} there is a strongly maximal matching $M_{i+1}$ in $G'_{i+1}$ with $supp(N_i) \subseteq supp(M_{i+1})$.

Finally, before we switch over to the proof of \ProR{pro:newOO}, let us show that the choice of $N_i$ and $M_{i+1}$ imply that

\begin{txteq}\label{unmatched}
  Every vertex $U$ of $G'_{i+1}$ that is not matched by $M_{i+1}$ is a set of vertices of $G'_i$ (i.e.\ $U\notin\OO_i$) and precisely one of the elements of $U$ is unmatched by $M_i$.
\end{txteq}

This will, at the end of the construction, help us to show that the resulting matching is strongly $w$-minimal.

Indeed, consider such a $U$ and note that $U$ is also unmatched by $N_i$ as $supp(N_i)\subseteq supp(M_{i+1})$. Suppose that $U\in\OO_i$. If $U\in\OO_i^{\MAX}$ then $U\notin X'_i$, since otherwise the definition of $\cu_i$ would imply that there is a set $U'\in\cu_i$ that contains $U$; this would in turn imply that $U'\in T_i$ by~\ref{enum:newST} of \ProR{pro:newOO}, and hence $U'\in\OO_{i+1}$ which contradicts the assumption that $U\in V(G'_{i+1})=\OO_{i+1}^{\MAX}$. Thus $U\notin\OO_i^{\MAX}$. Suppose that $U\in\OO_i \setminus \OO_i^{\MAX}$. As $U$ is a vertex of $G'_{i+1}$ there is a set $U'\ni U$ with $U'\in\cv_i\setminus\OO_{i+1}\subset S_i$. Since all elements of $S_i=S'_i\setminus T'_i$ are matched in $M_i$, they are also matched in $M^*_i$. Thus $U'$ is matched in $M^*_i$ and hence all its elements---in particular $U$---are matched in $N_i$, a contradiction. This proves $U\notin\OO_i$, and by the construction of the graphs $G'_i$ we obtain that $U$ is a set of vertices of $G'_i$. To prove~\eqref{unmatched} it remains to show that there is an element of $U$ that is unmatched in $M_i$. But this follows immediately from \ProR{pro:newOO}\ref{enum:edgesfromU}.

\begin{proof}[Proof of \ProR{pro:newOO}]
  We will derive both~\ref{enum:uamatchable} and~\ref{enum:edgesfromU} from another fact. For this, note first that $\cu_i$ is the set of vertex sets of components of $G'_i[T'_i]$, since any vertex adjacent to a vertex of $T'_i$ in $G'_i$ lies, clearly, in $S'_i \cup T'_i$. Now let $U\in\cu_i$ and $u\in U$; then there is an $x\in X'_i$ and a (possibly trivial) $M_i$-alternating $x-u$ path of even length $P$ in $G'_i$. Moreover, for any neighbour $v\in U$ of $u$, we find a $y\in X'_i$ and a (possibly trivial) $M_i$-alternating $y-v$ path of even length $Q$ in $G'_i$. It is easy to see that $P\cup\{uv\}\cup Q$ either contains an $M_i$-alternating $x-y$ path or an $M_i$-alternating $x-v$ path of even length; indeed, if $P$ and $Q$ are disjoint then $P\cup\{uv\}\cup Q$ is itself an $M_i$-alternating $x$--$y$~path, and otherwise, if $q$ is the first vertex on $P$ that lies in $Q$, then either the path $xPqQy$ or the path $xPqQv$ is $M_i$-alternating. But an $M_i$-alternating path between vertices in $X'_i$ is finitely $M_i$-improving, thus, since $M_i$ is strongly maximal, the latter holds. This proves that any vertex $x$ in $X'_i$ that sends an $M_i$-alternating path of even length in $G'_i$ to some vertex of $U$ sends an $M_i$-alternating path of even length in $G'_i$ to \emph{every} vertex of $U$. In particular, $U$ cannot contain more than one element of $X'_i$.

  Let $x,y\in V(G'_i)$. We say that $x$ \defi{dominates} $y$ if there is an $M_i$-alternating $x$--$y$~path of even length. If a set $X\subset V(G'_i)$ contains the vertices of such a path, we say that \defi{$x$ dominates $y$ via $X$}. We claim that

\begin{txteq}\label{dominatingvx}
  For every $U\in\cu_i$ there is a vertex $x_U\in U$ that dominates every $v\in U$ via $U$.
\end{txteq}

For a vertex $x_U$ as in \eqref{dominatingvx} we say that \defi{$x_U$ dominates $U$}. Clearly~\eqref{dominatingvx} implies that every vertex $v$ in $U-x_U$ is matched by $M_i$ to another vertex in $U-x_U$ (namely, to its predecessor in the $M_i$-alternating $x_U$--$v$~path in $G'_i[U]$ of even length), while $x_U$ either lies in $X'_i$ (i.e.\ is unmatched by $M_i$) or is matched by $M_i$ to a vertex outside $U$. In particular, each $U$ can be dominated by at most one vertex. Moreover, \eqref{dominatingvx} implies~\ref{enum:uamatchable} and~\ref{enum:edgesfromU}: Indeed, consider any set $U\in\cu_i$. For every $v\in U$, the symmetric difference of $M_i$ with the $M_i$-alternating $x_U$--$v$~path of even length in $G'_i[U]$ is a matching of $U-v$, which shows~\ref{enum:uamatchable}. Furthermore, as noted above, $|M_i\cap\delta(U)|=0$ if $x_U\in X'_i$ and $|M_i\cap\delta(U)|=1$ otherwise. Since no vertex in $U-x_U$ lies in $X'_i$ this implies~\ref{enum:edgesfromU}.

For the proof of~\eqref{dominatingvx}, we distinguish two cases. The first case is when $U$ contains a vertex of $X'_i$, say $x$. Recall that there is a vertex in $X'_i$ sending an $M_i$-alternating path of even length to every vertex in $U$, and clearly this vertex must be $x$. We claim that $x$ dominates $U$. Indeed, let $U'$ be a maximal subset of $U$ such that $x$ dominates every $u\in U'$ via $U'$, and suppose that $U'\not= U$. As $G'_i[U]$ is connected, there is a vertex $u\in U\setminus U'$ which has a neighbour $v\in U'$. Every vertex $y\in U'-x$ is matched in $M_i$ to a vertex in $U'$, namely to the penultimate vertex on any $M_i$-alternating $x$--$y$~path in $G'_i[U']$ of even length. Therefore no edge in $\delta(U')$ lies in $M_i$; in particular, $vu$ does not lie in $M_i$. Let $P$ be an $M_i$-alternating $x$--$u$~path of even length (possibly using vertices outside $U$) and let $w$ be its last vertex in $U'$. Then, the first edge of $wPu$ does not lie in $M_i$. Now since there is an $x$--$v$~path of even length in $U'$ it is easy to see that all vertices on $wPu$ lie in $T'_i$ and hence in $U$; moreover, for every $y\in wPu$ there is an $M_i$-alternating $x$--$y$~path in $G'_i[U'\cup V(wPu)]$ of even length, thus $x$ dominates $y$ via $U'\cup V(wPu)$, contradicting the maximality of $U'$.

The second case is when $U\cap X'_i=\emptyset$. Again, recall that there is a vertex $x\in X'_i$ that sends an $M_i$-alternating path of even length in $G'_i$ to every vertex of $U$; let $P$ be an $M_i$-alternating $x-U$ path, and note that it has even length since its penultimate vertex cannot lie in $T'_i$. Let $z$ be the last vertex of $P$ and let $e$ be the last edge of $P$ (hence $e\in M_i$). We claim that $z$ dominates every vertex in $U$. Indeed, let $U'\subset U$ be maximal such that $z$ dominates every $v\in U'$ via $U'$. Consider a vertex $u\in U\setminus U'$ which has a neighbour $v\in U'$. Like in the previous case, no edge in $\delta(U')\setminus \{e\}$, in particular $vu$, lies in $M_i$. Let $Q$ be an $M_i$-alternating $x$--$u$~path of even length, let $y$ be its last vertex outside $U$ and let $f$ be the edge on $Q$ after $y$. Since $y\in S'_i\setminus T'_i$, the path $xQy$ has odd length and hence $f\in M_i$. We claim that there is a vertex on $yQu$ that lies in $U'$. If $y$ is the predecessor of $z$ on $P$, then $f=e$ and $z$ is such a vertex. We may thus assume that $y$ is not the predecessor of $z$ on $P$. This implies that $y$ does not lie on $P$, as otherwise $P$ would have to use $f$ and would hence meet $U$ before $z$. If $yQu$ avoids $U'$, then there is an $M_i$-alternating $x$--$y$~path of even length: go from $x$ to $z$ along $P$, then from $z$ to $v$ within $G'_i[U']$, then use the edge $vu$ and finally along $uQy$ to $y$. But $y\notin T'_i$, a contradiction. Hence $yQu$ has a last vertex $w$ in $U'$, and all vertices of $wQu$ lie in $U$. Now like in the previous case it follows that $z$ dominates every vertex in $U'\cup wQu$ via $U'\cup wQu$, contradicting the maximality of $U'$. This proves~\eqref{dominatingvx}, and hence~\ref{enum:uamatchable} and~\ref{enum:edgesfromU} as discussed above.

A consequence of~\ref{enum:edgesfromU} is
\begin{txteq}\label{intersectingU}
  For every $M_i$-alternating path $P$ starting in $X'_i$ and every $U\in\cu_i$, if $P\cap G'_i[U]$ has more than one vertex then it is a subpath of $P$ whose first edge is not in $M_i$ and whose last edge is an edge of $M_i$ or the last edge of $P$.
\end{txteq}
Indeed, let $P$ and $U$ be as in the statement of~\eqref{intersectingU}, and assume that $P$ contains more than one vertex from $U$. For every vertex $u\in U\cap V(P)$ whose predecessor $v$ on $P$ does not lie in $U$ the edge $vu$ lies in $M_i$, as otherwise $Pv$ would have even length, contradicting the fact that $v\in S'_i\setminus T'_i$. By~\ref{enum:edgesfromU} there is no such $u$ if $U$ contains the starting vertex of $P$, and there is at most one such $u$ otherwise. Therefore, $P\cap G'_i[U]$ is a subpath of $P$, and if the endvertex of $P$ does not lie in $U$, then again by~\ref{enum:edgesfromU} the edge of $P$ from $U$ to $V(G'_i)\setminus U$ does not lie in $M_i$, and hence the last edge of $P\cap G'_i[U]$ does lie in $M_i$.

It remains to show~\ref{enum:newST}. Let us first show $S_i\supset S'_i\setminus T'_i$ and $T_i\supset\cu_i$. Let $v\in S'_i\setminus T'_i$ and pick an $M_i$-alternating path $P$ in $G'_i$ of odd length from a vertex $x\in X'_i$ to $v$. Note that $v$ is not contained in any element of $\cu_i$. Let $U_0$ be the element of $\cu_i$ that contains $x$, and note that $U_0\in X_i$ by~\ref{enum:edgesfromU}. Then by~\eqref{intersectingU} contracting the sets in $\cu_i$ turns $P$ into an $M^*_i$-alternating path $P^*$ in $G^*_i$ of odd length starting in $X_i$, hence $v\in S_i$.

Now let $U\in\cu_i$, pick a vertex $u\in U$ and an $M_i$-alternating path $P$ of even length in $G'_i$ from a vertex $x\in X'_i$ to $u$. Again~\eqref{intersectingU} yields that contracting the sets in $\cu_i$ turns $P$ into an $M^*_i$-alternating path $P^*$ of even length in $G^*_i$ starting in $X_i$, whence $U\in T_i$.

To prove $S_i\subset S'_i\setminus T'_i$ and $T_i\subset\cu_i$, let $P^*$ be an $M^*_i$-alternating path in $G^*_i$ from $U_X\in X_i$ to a vertex $U$ of $G^*_i$; we will use $P^*$ to construct an $M_i$-alternating path $P$ in $G'_i$ whose length has the same parity as that of $P^*$. Let $U_0=U_X,U_1,\dotsc,U_n$ be the vertices in $\cu_i$ that lie (in this order) on $P^*$. Note that if $U\in\cu_i$ then $U_n=U$. For $j>0$ let $u_j$ be the vertex on $P^*$ before $U_j$, and for $j<n$ let $w_j$ be the vertex on $P^*$ after $U_j$. Note that each $u_j$ and each $w_j$ are neighbours of $U_j$ (which is a component of $G'_i-(S'_i\setminus T'_i)$) and hence lie in $S'_i\setminus T'_i$. Each edge $u_jU_j$ in $P^*$ corresponds to an edge $u_jv_j^-$ in $E(G'_i)$ with $v_j^-\in U_j$, while each edge $U_jw_j$ corresponds to an edge $v^+_jw_j$ in $E(G'_i)$. For $j=0,1,\dotsc,n$ let $v_j:=x_{U_j}$; by~\ref{enum:edgesfromU} we have $v_0\in X'_i$.

Recursively for $j=0,1,\dotsc,n$, we construct $M_i$-alternating paths $P_j$ of even length in $G'_i$ from $v_0$ to $v_j$ so that $P_j$ meets $U_j$ only in $v_j$, starting with the trivial path $P_0=v_0$. For $1\le j\le n$, since $P_{j-1}$ is an $M_i$-alternating path of even length in $G'_i$, its last edge (if existent) is in $M_i$. Hence by~\ref{enum:edgesfromU} every other edge in $\delta(U_{j-1})$, in particular $v^+_{j-1}w_{j-1}$, does not lie in $M_i$. As $v_{j-1}$ dominates $v_{j-1}^+$ via $U_{j-1}$, there is an $M_i$-alternating path $Q_{j-1}$ of even length in $G'_i[U_{j-1}]$ from $v_{j-1}$ to $v_{j-1}^+$. We can thus prolong $P_{j-1}$ to an $M_i$-alternating path $P_j$ from $v_0$ to a vertex in $U_j$: Let $P_j:=P_{j-1}v_{j-1}Q_{j-1}v_{j-1}^+w_{j-1}P^*u_jv_j^-$. We claim that $P_j$ has even length and that $v_j^-=v_j$. Indeed, as $u_j\in S'_i\setminus T'_i$, the $M_i$-alternating path $P_ju_j$ has odd length and thus $u_jv_j^-\in M_i$. As the only edge in $\delta(U_j)\cap M_i$ is incident with $v_j$, we have $v_j=v^-_j$ as desired.

If $U\in\cu_i$, we have thus constructed an $M_i$-alternating path $P=P_n$ in $G'_i$ whose last edge coincides with the last edge of $P^*$ and hence either both $P$ and $P^*$ have even length or they both have odd length. If $U\notin\cu_i$, then we can apply the same construction as before to obtain an $M_i$-alternating $v_0$--$U$~path $P$ from $P_n$ whose length has the same parity as the length of $P^*$. If this parity is even then the last vertex of $P$ is in $T'_i$ and hence in a set in $\cu_i$, which implies $T_i\subset\cu_i$. If the parity is odd then $U\notin\cu_i$ (as otherwise $P=P_n$ and this path has even length), hence $U$ is a vertex of $G'_i$ and lies in $S'_i\setminus T'_i$, which proves $S_i\subset S'_i\setminus T'_i$. This completes the proof of \ProR{pro:newOO}.
\end{proof}

\begin{proposition} \label{pro:ahalf}
  The function $\pi_{i+1}$ satisfies \eqref{eq:nonnegative} and \eqref{eq:undersaturated}.
\end{proposition}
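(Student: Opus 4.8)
The plan is to verify the two inequalities for $\pi_{i+1}$ in turn, carrying along one bookkeeping invariant: for every $j$ and every $U\in\OO_j$ one has $\pi_j(U)\in\tfrac12\mathbb Z$, and $\pi_j(U)=\tfrac12$ whenever $|\uuu U|\ge 3$. The first part is clear since $\pi$ starts at $0$ and every update moves it by $0$ or $\pm\tfrac12$. For the second part, a set $U$ with $|\uuu U|\ge 3$ enters the family only as an element of some $\cu_j$ (the elements of $\cu_j$ are uniformly almost matchable by \ProR{pro:newOO}, so once nontrivial they have at least three nested vertices), at which point $\pi_{j+1}(U)=\tfrac12$; thereafter a set already present in $\OO_j$ is never in $T_j=\cu_j$, so its value can only stay put or drop by $\tfrac12$, and the deletion rule --- which removes $U$ from $\cv_j$ exactly when $U\in S_j$, $|\uuu U|>1$ and $\pi_{j+1}(U)=0$ --- ensures the value never actually reaches $0$ while $U$ stays in the family. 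Granting this, \eqref{eq:nonnegative} for $\pi_{i+1}$ is immediate, since every $U\in\OO_{i+1}$ with $|\uuu U|\ge 3$ then has $\pi_{i+1}(U)=\tfrac12\ge 0$.

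For \eqref{eq:undersaturated}, fix $e=xy\in E$. Every set removed from $\cv_i$ in passing to $\OO_{i+1}$ carries $\pi_{i+1}$-value $0$, so summing over $\OO_{i+1}$ is the same as summing over $\cv_i$; writing $\cv_i=\OO_i\cup\cu_i$ (a disjoint union), using $S_i\subseteq\OO_i$, and splitting by the three clauses in the definition of $\pi_{i+1}$, one gets
\[
\sum_{\substack{U\in\OO_{i+1}\\ e\in\delta(U)}}\pi_{i+1}(U)\;=\;\sigma+\tfrac12\,(c-s),
\]
where $\sigma:=\sum_{U\in\OO_i,\ e\in\delta(U)}\pi_i(U)$, which is at most $w(e)$ by the inductive \eqref{eq:undersaturated}, $c$ is the number of $U\in\cu_i$ with $e\in\delta(U)$, and $s$ the number of $U\in S_i$ with $e\in\delta(U)$. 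So it suffices to prove $\sigma+\tfrac12(c-s)\le w(e)$.

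The members of $\cu_i$ are maximal in the laminar family $\cv_i$, so at most one of them contains each endpoint of $e$, giving $c\le 2$; and if $c\ge 1$ then $e$ joins a block $Z\in\cu_i=T_i$ to another maximal set $Z'$ of $\cv_i$ on its other side. If $e\in E_i$ then $e$ is an edge of $G^*_i$, so $Z'\in S_i$ by \eqref{eq:neighboursofT}; since $S_i\cap T_i=\emptyset$ this already forces $c\le 1$, and then $c=1$ gives $s\ge 1$, so $\sigma+\tfrac12(c-s)\le\sigma\le w(e)$. If $e\notin E_i$ then $\sigma<w(e)$, hence $\sigma\le w(e)-\tfrac12$ as $\sigma\in\tfrac12\mathbb Z$ and $w(e)\in\mathbb Z$, and this settles the cases $c\le 1$. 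The remaining possibility --- and the crux --- is $e\notin E_i$ with $c=2$: both endpoints of $e$ lie in distinct blocks $Z_x,Z_y\in\cu_i$, the only maximal sets of $\cv_i$ separating the endpoints of $e$ are $Z_x,Z_y\in T_i$, so $s=0$, and one must sharpen $\sigma\le w(e)-\tfrac12$ to $\sigma\le w(e)-1$, i.e.\ show $\sigma\in\mathbb Z$.

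This last step is the main obstacle and the only place where we need $M_i$ to be \emph{strongly} maximal rather than just maximal. The plan is a parity argument. Setting $\Pi_i(v):=\sum_{U\in\OO_i,\ v\in\uuu U}\pi_i(U)$, one has, for any edge $e'=ab$, that $\sum_{U\in\OO_i,\ e'\in\delta(U)}\pi_i(U)\equiv\Pi_i(a)+\Pi_i(b)\pmod 1$, and \eqref{eq:sat} forces this to be an integer for every $e'\in E_i$; hence $\rho_i(v):=\Pi_i(v)\bmod 1\in\{0,\tfrac12\}$ is constant on each component of $G_i$. It then suffices to see that $\rho_i$ takes the same value at $x$ and at $y$. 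As $Z_x,Z_y\in\cu_i$ are components of $G'_i[T'_i]$, each of $W_x,W_y$ is joined in $G'_i$ to a vertex of $X'_i$ by an alternating walk, so it is enough to show that all components of $G'_i$ meeting $X'_i$ carry a single value of $\rho_i$. I would establish this by induction along the construction, tracking with \ProR{pro:newOO} how $\rho$ changes under contraction and decontraction of blocks, and invoking strong maximality of $M_i$ --- which, by \Lr{strongly_max_iff_no_finiteap}, forbids a finitely improving alternating path between two vertices of $X'_i$ --- to rule out exactly the configuration that would let two such components drift to opposite parities (it would make the connecting edge tight and hence produce such a path). Once $\rho_i(x)=\rho_i(y)$ we get $\sigma=\Pi_i(x)+\Pi_i(y)\in\mathbb Z$, so $\sigma<w(e)$ yields $\sigma\le w(e)-1$ and \eqref{eq:undersaturated} follows. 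Getting this parity propagation exactly right is where I expect the work to concentrate.
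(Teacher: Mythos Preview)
Your reduction is exactly the paper's: the same energy function (the paper calls it $p_i$, you call it $\Pi_i$), the same observation that $p_i$ is constant modulo~$1$ on components of $G_i$, and the same case analysis culminating in the crux case $e\notin E_i$ with both endpoints in distinct blocks of $T_i=\cu_i$, where one must show $\sigma\in\mathbb Z$. Your treatment of \eqref{eq:nonnegative} is also correct (and in fact slightly sharper than the paper's, since you observe $\pi_j(U)=\tfrac12$ for all surviving $U$ with $|\uuu U|\ge 3$).

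The gap is the step you flag yourself: you do not actually prove that $\rho_i$ agrees on the two components of $G_i$ containing $x$ and $y$. Your proposed route --- an induction on the construction together with strong maximality of $M_i$ to block a hypothetical ``parity-splitting'' edge --- is not how the paper proceeds, and as stated it does not obviously go through: in the crux case $e\notin E_i$, so the offending edge is \emph{not} tight and cannot directly furnish a finitely improving $M_i$-alternating path; there is no clear mechanism by which strong maximality alone forces the two components to share a parity.

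The paper closes this gap differently, using \eqref{unmatched} rather than strong maximality. Recursively applying \eqref{unmatched}, one finds for each $X\in X_i$ a vertex $x^{*}\in\uuu X$ whose containing maximal set has been unmatched at every step $j\le i$; tracing the updates, such a vertex satisfies $p_i(x^{*})=\tfrac12 i$ exactly (at each step the only change to its energy is the $+\tfrac12$ from the new $\cu_j$-block, since none of the sets through $x^{*}$ lie in $S_j$). Since each of $Z_x,Z_y\in T_i$ lies in the same component of $G^*_i$ as some $X\in X_i$, the vertices $x$ and $y$ lie in the same components of $G_i$ as such reference vertices $x^{*}$, $y^{*}$, and hence $\rho_i(x)=\rho_i(x^{*})=\tfrac12 i\bmod 1=\rho_i(y^{*})=\rho_i(y)$. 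That gives $\sigma=p_i(x)+p_i(y)\in\mathbb Z$ and finishes the proof. This is the missing idea you should supply in place of the vague inductive sketch.
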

\begin{proof}
By the definition of $\pi_{i+1}$ we have $\pi_{i+1}(U)=\frac12$ for every $U\in\cu_i$, thus every $U$ with $|\uuu U|>1$ begins its life with a positive potential. Since we only change potentials by $\frac12$, the potential of $U$ cannot obtain a negative value without becoming $0$ at some step $k$. But then $U$ is removed from $\OO_{k+1}$, so \eqref{eq:nonnegative} holds.

To prove that~\eqref{eq:undersaturated} holds, let $e=uv$ be an edge of $G$ and suppose that~\eqref{eq:undersaturated} does not hold for $e$ and $\pi_{i+1}$. Since it holds for $e$ and $\pi_i$ and we raised the potential only for sets in $T_i$, there is a set $U_1\in T_i$ (and hence $U_1\in\OO_{i+1}^{\MAX}$) with $e\in \delta(U_1)$, say $u\in \uuu U_1$ and $v\notin \uuu U_1$. Therefore, there is no set $U\in\OO_i$ with $\{u,v\}\subset\uuu U$. Since $\cv_i$ is laminar there is a unique set $U_2\in\cv_i^{\MAX}\setminus\{U_1\}$ with $e\in\delta(U_2)$, i.e.\ $v\in \uuu U_2$ and $u\notin \uuu U_2$. Clearly, we have
\begin{equation}\label{eq:raisedpotential}
  \sum_{\substack{U\in\OO_{i+1}\\ e\in\delta(U)}}\pi_{i+1}(U) - \sum_{\substack{U\in\OO_i\\ e\in\delta(U)}}\pi_i(U) =
  \begin{cases}
        0 & \text{if }U_2 \in S_i,\\
        1 & \text{if }U_2 \in T_i,\\
        \frac12 & \text{otherwise.}
  \end{cases}
\end{equation}
As~\eqref{eq:undersaturated} holds for $e$ and $\pi_i$ but not for $e$ and $\pi_{i+1}$, this means that $U_2\notin S_i$ (in particular $U_2\in\OO_{i+1}$).

Suppose that $\sum_{U\in\OO_i, e\in\delta(U)}\pi_i(U) = w(e)$, i.e.\ $e$ is present in $G_i$. Therefore, $U_1$ and $U_2$ are neighbours in $G_i|\cv_i^{\MAX}$ and~\eqref{eq:neighboursofT} yields $U_2\in S_i$, a contradiction. This means that $\sum_{U\in\OO_i, e\in\delta(U)}\pi_i(U) < w(e) < \sum_{U\in\OO_{i+1}, e\in\delta(U)}\pi_{i+1}(U)$. Thus $\sum_{U\in\OO_i, e\in\delta(U)}\pi_i(U) = w(e)-\frac12$ and $\sum_{U\in\OO_{i+1}, e\in\delta(U)}\pi_{i+1}(U) = w(e)+\frac12$ and hence $U_2\in T_i$ by~\eqref{eq:raisedpotential}.

For every vertex $x\in G$, define the \defi{$i$th energy of $x$} as $p_i(x):= \sum_{x\in \uuu U}\pi_i(U)$. As there is no $U\in\OO_i$ with $\{u,v\}\subset \uuu U$, we have $\sum_{U\in\OO_i, e\in\delta(U)}\pi_i(U) = p_i(u)+p_i(v)$ and hence $p_i(u)+p_i(v)=w(e)-\frac12$ is not an integer. We will see that this leads to a contradiction.

We claim that for every component $C$ of $G_i$ and any two vertices $x,y\in C$, the value $p_i(x)+p_i(y)$ is an integer (or equivalently: for every component $C$ of $G_i$ either the $i$th energy is an integer for all vertices in $C$ or it is not an integer for all vertices in $C$); indeed, if $xy$ is an edge of $G_i$ (it clearly suffices to consider this case) then it satisfies \eqref{eq:sat}. But then
\begin{equation*}
  w(xy) = \sum_{\substack{U\in \OO_i\\ xy\in \delta(U)}} \pi_i(U)= p_i(x) + p_i(y) - \sum_{\substack{U\in \OO_i\\ \{x,y\} \subset \uuu U}} 2\pi_i(U),
\end{equation*}
and as $w(xy)$ and $2\pi_i(U)$ for each $U$ are integers, our claim follows. As $G_i[\uuu U]$ is connected for every $U\in\OO_i$ (which follows immediately from the construction), the $i$th energy is either integral for every vertex in $U$ or non-integral for every vertex in $U$.

Furthermore, by applying \eqref{unmatched} recursively it is easy to show that for any set $X\in X_i$ there is precisely one vertex $x\in \uuu X$ such that the sets $U_x^j\in \OO_j^{MAX}$ with $x\in \uuu U_x^j$ have been unmatched by $M_j$ in every step $j$ of the construction and thus
\begin{equation} \label{eq:p}
  p_i(x)=\frac12i.
\end{equation}
By the definition of $T_i$, every element $U$ of $T_i$ lies in the same component of $G'_i$ as some $X\in X'_i$ and hence every vertex in $\uuu U$ lies in the same component of $G_i$ as any vertex in $\uuu X$. This easily implies that the $i$th energy is either integral for all vertices in $\bigcup_{U\in T_i}\uuu U$ (if $i$ is even) or non-integral for all such vertices (if $i$ is odd). As $u\in \uuu U_1\in T_i$ and $v\in \uuu U_2\in T_i$, this implies that $p_i(u)$ and $p_i(v)$ are either both integral or both non-integral, in particular, $p_i(u)+p_i(v)$ is integral, which yields the desired contradiction.
\end{proof}

\begin{proposition} \label{pro:term}
  The procedure terminates.
\end{proposition}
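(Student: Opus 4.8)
The plan is to derive a contradiction from the assumption that the iteration runs forever, by tracking the ``energy'' $p_i(x)=\sum_{x\in\uuu U}\pi_i(U)$ of the vertices that stay unmatched. The idea is that \eqref{eq:p} already records that such a vertex accumulates energy $\frac12$ per step, while \eqref{eq:undersaturated} forbids the energies of the two endpoints of any edge from summing to more than its weight --- and the weights are bounded, while, $G$ being complete, any two unmatched vertices are joined by an edge.

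Concretely, I would first recall the normalisation under which $w$ takes only positive integer values and set $W:=\max_{e\in E(G)}w(e)<\infty$. Assuming for contradiction that the procedure never stops, for every $i$ the matching $M_i$ is neither perfect nor almost perfect, so $|X'_i|\ge 2$. The first step is to upgrade this to $|X_i|\ge 2$: every $v\in X'_i$ lies in $T'_i$ (via the trivial alternating path) and hence in some $U_v\in\cu_i$; by \ProR{pro:newOO}\ref{enum:edgesfromU} we have $|M_i\cap\delta(U_v)|=0$, so $U_v\in X_i$, and since (by the proof of \ProR{pro:newOO}) no member of $\cu_i$ meets $X'_i$ in more than one vertex, the map $v\mapsto U_v$ is injective, giving $|X_i|\ge|X'_i|\ge 2$. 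Next I pick two distinct $X,Y\in X_i$ and, using \eqref{eq:p}, vertices $x\in\uuu X$ and $y\in\uuu Y$ with $p_i(x)=p_i(y)=\frac12 i$. As distinct vertices of $G^*_i$ have disjoint supports, $\uuu X\cap\uuu Y=\emptyset$, so $x\ne y$, and no $U\in\OO_i$ has $\{x,y\}\subseteq\uuu U$: if $x\in\uuu U$ then, since $\OO_i\subseteq\cv_i$ is laminar and $X$ is maximal in $\cv_i$, we get $\uuu U\subseteq\uuu X$ and hence $y\notin\uuu U$. Completeness of $G$ now provides an edge $e:=xy$, and $\sum_{U\in\OO_i,\,e\in\delta(U)}\pi_i(U)=p_i(x)+p_i(y)=i$; by \eqref{eq:undersaturated} this is at most $w(e)\le W$. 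Since $i$ was arbitrary this is absurd, so the procedure terminates --- indeed within $W+1$ steps.

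The main obstacle I anticipate is not in this final computation, which is short, but in verifying that the inputs are cleanly available: that $|X'_i|\ge 2\Rightarrow|X_i|\ge 2$ really does follow from \ProR{pro:newOO} (it does, through the ``at most one unmatched vertex per contracted set'' statement), and that \eqref{eq:p} applies to elements of $X_i$ at the current stage. The conceptual content --- linear growth of the energy of a persistently unmatched vertex --- is already isolated in \eqref{eq:p}; what remains here is only to convert its unboundedness into a contradiction, and this is the single place where completeness of $G$ is used, and, as the examples earlier in this section show, it cannot be dispensed with.
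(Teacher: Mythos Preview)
Your argument is correct and follows the same route as the paper: use \eqref{eq:p} to find, at any step $i$ with at least two unmatched elements, two distinct vertices $x,y$ each of energy $\tfrac12 i$, and then exploit completeness of $G$ to bound $i$ by $\max_e w(e)$. The only difference is cosmetic---the paper concludes by noting that at $i=\max_e w(e)$ the edge $xy$ satisfies \eqref{eq:sat} and hence gives a length-one augmenting path in $G'_i$, contradicting the strong maximality of $M_i$, whereas you read off $i\le w(xy)$ directly from \eqref{eq:undersaturated}; your passage from $|X'_i|\ge 2$ to $|X_i|\ge 2$ and your check that no $U\in\OO_i$ contains both endpoints make explicit what the paper leaves implicit.
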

\begin{proof}
  We claim that after $i=\max_{e\in E(G)} w(e)$ steps (if not earlier) there is at most one unmatched vertex in $G'_i$. Suppose for contradiction that there are two, $U, Y$ say. There
  are vertices $u\in \uuu U$ and $y\in \uuu Y$ with $p_i(u)=p_i(y)=\frac12 i$, i.e.\ that satisfy~\eqref{eq:p}. Now the edge $uy$ lies in $G'_i$ since by \eqref{eq:p} $p_i(u)+p_i(y)= \max_{e\in E(G)} w(e) \geq w(uy)$, and this contradicts the maximality of $M_i$.
\end{proof}

Thus, after finitely many steps, $n$ say, we have a perfect or almost perfect matching $M_n$ in $G'_n$. By recursively applying condition~\eqref{cond:uam} we can extend $M_n$ to a perfect or almost perfect matching $M$ of $G$ with the additional property that

\begin{txteq}\label{MandOO}
  For every $U\in\OO_n$ we have $|M\cap\delta(U)|\in\{0,1\}$, and $|M\cap\delta(U)| = 0$ if and only if $M$ is almost perfect and $\uuu U$ contains the vertex unmatched by $M$.
\end{txteq}

We now claim that $M$ is strongly $w$-minimal.

Firstly, consider the case when $M$ is perfect. Pick any perfect matching $M'$ so that $M \triangle M'$ is finite, that is, there are disjoint finite edge-sets $N\subset M$ and $F \subset M'$ so that $M'=M - N + F$. By the definition of $G_i$ we have

\begin{equation} \label{eq:f}
  \sum_{e\in N} w(e) = \sum_{e\in N} \sum_{\substack{U\in \OO_n\\ e\in \delta(U)}}\pi_n(U),
\end{equation}

and by \eqref{eq:undersaturated} we have
\begin{equation} \label{eq:s}
  \sum_{e\in F} w(e) \geq \sum_{e\in F} \sum_{\substack{U\in \OO_n\\ e\in \delta(U)}}\pi_n(U).
\end{equation}

By~(\ref{MandOO}), for any element $U$ of $\OO_n$ there is at most one edge of $M$ in $\delta(U)$, thus $U$ appears in the first sum at most once. Moreover, as both $M$ and $M'$ are perfect, $F \cup N$ is a finite set of disjoint cycles and thus if $\pi_n(U)$ appears in the sum of~\eqref{eq:f} then it also appears in the sum of~\eqref{eq:s}. By the same argument, any $U$ with negative potential (hence $|U|=1$ by~\eqref{eq:nonnegative}) appearing in~\eqref{eq:s} also appears in~\eqref{eq:f}. Thus

\begin{equation} \label{eq:long}
\sum_{e\in N} \sum_{\substack{U\in \OO_n\\ e\in \delta(U)}}\pi_n(U) \leq \sum_{e\in F} \sum_{\substack{U\in \OO_n\\ e\in \delta(U)}}\pi_n(U),
\end{equation}

which by~\eqref{eq:f} and~\eqref{eq:s} implies that $\sum_{e\in N} w(e) \leq \sum_{e\in F} w(e)$. As $M'$ was chosen arbitrarily, this proves that $M$ is strongly $w$-minimal.

Next, consider the case when $M$ is almost perfect. There is only a difference to the previous case when $F$ meets the only vertex $x$ not matched by $M$, however \eqref{eq:long} remains true since by \eqref{eq:p} $x$ has maximum energy (in particular non-negative). Thus $M$ is strongly $w$-minimal also in this case.

\end{proof}

\begin{proof}[Proof of \Tr{main}]
Clearly, we may assume that all weights $w(e)$ are positive. Let $G'$ be the complete graph resulting from $G$ by adding an edge of weight $0$ between any two non-adjacent vertices of $G$, and define $w'(e):=- w(e)$ for every $e \in E(G')$. By \Tr{sminpm}, $G'$ has a strongly $w'$-minimal perfect or almost perfect matching $M$, and then $M':=M \cap E(G)$ is a strongly $w$-maximal matching of $G$. Indeed, suppose that there is a matching $M''$ where $M'' \triangle M'$ is finite such that
\begin{equation} \label{eq:w}
w[M''\backslash M']<w[M' \backslash M''].
\end{equation}
Let $L$ be the set of edges of $M \backslash M'$ that are incident with an edge of $M''\backslash M'$. Then, $N:=(M \cup (M''\backslash M')) \backslash (L \cup M'\backslash M'')$ is a matching in $G'$ with $N \triangle M$ finite, and since $w[L]=0$ we obtain $w[N\backslash M]<w[M \backslash N]$ by~\eqref{eq:w}. If $N$ leaves more than one vertex of $G'$ unmatched then, as $G'$ is complete, we can arbitrarily match all but at most one of those unmatched vertices to extend $N$ to a perfect or almost perfect matching of $G'$. As $w(e)\leq 0$ for every $e\in e(G')$, this contradicts the fact that $M$ is strongly $w$-minimal.
\end{proof}

\section{The non-rational case}

We now show that \Tr{sminpm} and \Tr{main} fail when we allow non-rational weights. Since \Tr{main} follows from \Tr{sminpm}, it suffices to construct a counterexample to the former. This counterexample $G$ will consist of two vertices $x$ and $y$, joined by infinitely many paths $P_1,P_2,\dotsc$. The idea is to choose the weights $w(e)$ so that a potential strongly $w$-maximal matching has to match both $x$ and $y$, and it has to match them in the same path $P_i$, and so that any such matching can be locally improved by changing it along $P_i \cup P_{i+1}$ so as to match $x$ and $y$ in $P_{i+1}$.

In order to achieve this situation, we will need an irrational value $a$ as a weight with the property that for every $\varepsilon>0$, there is an $n\in\N$ such that $na$ differs from some integer by less than $\varepsilon$. This is satisfied for instance for $a:=\sum_{i=1}^{\infty}10^{1-\frac12 i(i+1)} = 1.010010001\dotso$. The only weights in our graph will be $a$, $2a$, and $2a-1$. We will choose the paths $P_i$ so that each of them contains an odd number of edges, $2n_i+1$ say. Every second edge on $P_i$ will have weight $2a-1$, while the remaining $n_i+1$ edges on $P_i$ will have weights $a$ and $2a$, and the sum of their weights will be larger than $n_i(2a-1)$, i.e.\ than the sum of the weights of the other edges, by a value that is strictly increasing with $i$.

First, let us define the numbers $n_i$. Let $n_1:=1$ and, for $i=1,2,\dotsc$, let $n_{i+1}:=10^{i+1}n_i+1$. (Thus, $n_2=101,n_3=101001$ etc.) It is not hard to check that
\begin{equation}\label{eq:almostintegral}
  10^{-(i+1)}<10^{\frac12 i(i+1)-1}a-n_i<10^{-i}.
\end{equation}
We write $P_i=x^i_0x^i_1\ldots x^i_{2n_i}x^i_{2n_i+1}$, where $x=x^i_0$ and $y=x^i_{2n_i+1}$. As already mentioned, we put $w(e):=2a-1$ for each edge $e=x^i_{2j-1}x^i_{2j},~1\le j\le n_i$. We call these edges the \emph{even edges} of $P_i$; the other edges on $P_i$ are the \emph{odd edges} of $P_i$. Define the weights of the odd edges of $P_i$ as follows. Inductively, for $k=0,1,\ldots,n_i$, we put
\begin{equation}\label{eq:evenweight}
  w(x^i_{2k}x^i_{2k+1}):=
  \begin{cases}
    2a & \text{if }\sum_{j=0}^{k-1}w(x^i_{2j}x^i_{2j+1}) < k(2a-1)\\
    a & \text{otherwise}
  \end{cases}
\end{equation}
By this definition, we achieve that on every subpath $xP_ix^i_{2k}$ of $P_i$, the sums of weights of the even edges (which equals $k(2a-1)$) and of the odd edges do not differ too much. Indeed, it is easy to check that
\begin{equation}\label{eq:evenodd}
  1-a\le\sum_{j=0}^{k-1}w(x^i_{2j}x^i_{2j+1})-k(2a-1)<1.
\end{equation}
Given a subpath $P$ of some $P_i$, we write $even(P)$ (respectively $odd(P)$) for the sum of the weights of the even (resp.\ odd) edges of $P_i$ on $P$. With this notation and~\eqref{eq:evenodd}, we have the two inequations
\begin{align}
  \label{eq:evenlength}
  odd(xP_ix^i_k)-even(xP_ix^i_k)&<1 \text{ for $k$ even,\qquad and}\\
  \label{eq:oddlength}
  odd(xP_ix^i_k)-even(xP_ix^i_k)&\ge a \text{ for $k$ odd.}
\end{align}

Suppose there is is a strongly $w$-maximal matching $M$ in $G$. First, we show that on each $P_i$ there is at most one unmatched vertex. Indeed, if there are at least two unmatched vertices on some $P_i$, then we can pick two of them $x^i_j$ and $x^i_k$ with $j<k$ so that all vertices $x^i_l$ with $j<l<k$ are matched. Note that the path $P=x^i_jP_ix^i_k$ has odd length. If $j$ is even then $k$ is odd, and we have $odd(P)-even(P)=odd(xP_ix^i_k)-even(xP_ix^i_k)-\big(odd(xP_ix^i_j)-even(xP_ix^i_j)\big)>a-1>0$. If $j$ is odd, we have by a similar calculation again $even(P)-odd(P)>a-1>0$. This means that we can replace the edges in $M\cap E(P)$ by the edges in $E(P)\setminus M$ and improve $M$, a contradiction. Therefore, every $P_i$ contains at most one unmatched vertex. In particular, $x$ and $y$ cannot both be unmatched.

Thus one of $x,y$, say $x$, is matched in $M$, to $x^i_1$ say. If $y=x^i_{2n_i+1}$ is unmatched and $P_i$ has odd length, there has to be another unmatched vertex on $P_i$, which again leads to a contradiction. Thus, $y$ is matched in $M$, to $x^j_{2n_j}$ say. Easily, for $k\not= i,j$ each vertex on $P_k$ is matched. Suppose $i\not= j$; then there are unmatched vertices $x^i_m$ and $x^j_n$. Since no other vertex on $P_i\cup P_j$ is unmatched, $m$ is even and $n$ is odd. Furthermore, the path $P:=x^i_mP_ixP_jx^j_n$ is an $M$-alternating path; we claim that replacing the edges in $M\cap E(P)$ by those in $E(P)\setminus M$ is an improvement of $M$. Indeed, on $x^i_mP_ix$, we replace the odd edges by the even ones and lose less than $1$ by \eqref{eq:evenlength}, while on $xP_jx^j_n$, we replace the even edges by the odd ones and gain at least $a$ by \eqref{eq:oddlength}. Since $a>1$, this contradicts the strong $w$-maximality of $M$ and hence $i=j$.

Thus, $M$ is a perfect matching. We claim that we can improve $M$ by replacing its edges in $P_i\cup P_{i+1}$ by those in $E(P_i\cup P_{i+1})\setminus M$. Indeed, $M$ consists of the odd edges of $P_i$ and the even edges of all the other $P_j$. Clearly, we have $even(P_j)=even(xP_jx^j_{2n_j})=n_j(2a-1)$ and $odd(P_j)=odd(xP_jx^j_{2n_j})+w(x^j_{2n_j}x^j_{2n_j+1})$ for every $j$, and if $k_j$ denotes of odd edges of $xP_jx^j_{2n_j}$ with weight $a$, then we have $odd(P_j)=n_j2a-k_ja+w(x^j_{2n_j}x^j_{2n_j+1})$ and hence
\begin{equation*}
  odd(P_j)-even(P_j) = n_j - k_ja + w(x^j_{2n_j}x^j_{2n_j+1}).
\end{equation*}
If $k_j<10^{\frac12 j(j+1)-1}$ then $odd(xP_jx^j_{2n_j})-even(xP_jx^j_{2n_j}) = n_j - k_ja > a-10^{-(j+1)}$ by~\eqref{eq:almostintegral}, which contradicts~\eqref{eq:evenlength} as $a-10^{-(j+1)}>1$. On the other hand, if $k_j>10^{\frac12 j(j+1)-1}$ then $odd(xP_jx^j_{2n_j})-even(xP_jx^j_{2n_j}) = n_j - k_ja < -a-10^{-j}$ by~\eqref{eq:almostintegral}, which contradicts~\eqref{eq:evenodd}. Thus, $k_j=10^{\frac12 j(j+1)-1}$ and $-10^{-j} < odd(xP_jx^j_{2n_j})-even(xP_jx^j_{2n_j}) < -10^{-(j+1)} < 0$. By~\eqref{eq:evenweight} we have $w(x^j_{2n_j}x^j_{2n_j+1})=2a$ and thus
\begin{equation*}
  2a-10^{-j} < odd(P_j)-even(P_j) < 2a-10^{-(j+1)}.
\end{equation*}
In particular, $odd(P_i)-even(P_i) < odd(P_{i+1})-even(P_{i+1})$ and hence we can improve $M$ by using the even edges of $P_i$ and the odd edges of $P_{i+1}$ instead of the odd edges of $P_i$ and the even edges of $P_{i+1}$. Thus we get a contradiction, proving that $G$ has no strongly $w$-maximal matching.

\bibliographystyle{plain}
\bibliography{graphs}

\begin{thebibliography}{1}

\bibitem{infinitetutte}
R.~Aharoni.
\newblock Matchings in infinite graphs.
\newblock {\em J. Combin. Th., Ser. B}, 44:87--125, 1988.

\bibitem{infinitemenger}
R.~Aharoni and E.~Berger.
\newblock {M}enger's theorem for infinite graphs.
\newblock Preprint.

\bibitem{aharoniziv}
R.~Aharoni and R.~Ziv.
\newblock Lp duality in infinite hypergraphs.
\newblock {\em J. Combin. Th., Ser. B}, 50:82--92, 1988.

\bibitem{diestelBook05}
R.~Diestel.
\newblock {\em Graph Theory \emph{(3rd edition)}}.
\newblock Springer-Verlag, 2005.
\newblock \\ Electronic edition available at:\\ {\small\tt
  http://www.math.uni-hamburg.de/home/diestel/books/graph.theory}.

\bibitem{edmonds}
J.~Edmonds.
\newblock Maximum matching and a polyhedron with 0,1-vertices.
\newblock {\em Journal of Research National Bureau of Standards Section B},
  69:125--130, 1965.

\bibitem{schrijverBook}
A.~Schrijver.
\newblock {\em Combinatorial Optimization - Polyhedra and Efficiency}.
\newblock Springer-Verlag, 2003.

\bibitem{steffens}
K.~Steffens.
\newblock Matchings in countable graphs.
\newblock {\em Canad. J. Math}, 29:165--168, 1976.

\end{thebibliography}

\end{document}